\newcommand{\titel}{Division by Zero in Common Meadows\thanks{%
  The preceding version v2 (22 December 2014) 
  appeared in: Rocco De Nicola and Rolf Hennicker
  (eds.), \emph{Software, Services and Systems: Essays Dedicated 
  to Martin Wirsing}, LNCS 8950, pp. 46-61, Springer, 2015.
  Main changes: axiom~\eqref{Ma12} in Table~\ref{Mda} is new and 
  the proof of Thm.3.2.1 has been corrected.}}
\newtheorem{theorem}{Theorem}[subsection]
\newtheorem{proposition}[theorem]{Proposition}
\theoremstyle{definition}
\renewcommand{\bot}{\ensuremath{\textup{\textbf{a}}}}
\newcommand{\ga}{I}
\newcommand{\bota}{{\hat{\bot}}}
\newcommand{\inverse}{-}
\newcommand{\Md}{\mathsf{Md}}
\newcommand{\Mda}{\mathsf{Md}_\bot}
\newcommand{\CMCZ}{\mathsf{C}_0}
\newcommand{\CIL}{\mathsf{CIL}}
\newcommand{\AVL}{\mathsf{AVL}}
\newcommand{\FVL}{\mathsf{NVL}}
\newcommand{\NVL}{\FVL}
\newcommand{\VAR}{\mathsf{VAR}}
\newcommand{\iCAP}{\mathsf{ICL}}
\newcommand{\ICL}{\iCAP}
\newcommand{\CL}{\mathsf{CL}}
\newcommand{\CCM}{\textup{CCM}}
\newcommand{\CM}{\textup{CM}}
\newcommand{\CQCM}{\textup{CQCM}}
\newcommand{\Nat}{{\mathbb N}}
\newcommand{\Rat}{{\mathbb Q}}
\newcommand{\Int}{{\mathbb Z}}
\newcommand{\M}{{\mathbb M}}
\newcommand{\md}{{{md}}}
\newcommand{\lh}{\Bigl(}\newcommand{\rh}{\Bigl)}
\title{\titel}
\author{
	Jan A.\ Bergstra ~and~ Alban\ Ponse
	\\
\\
  {\small
	  Section Theory of Computer Science}\\[-1mm]
	  \small Informatics Institute, Faculty of Science\\[-1mm]
	  \small University of Amsterdam, The Netherlands\\[-.6mm]
	  \small \url{https://staff.fnwi.uva.nl/{j.a.bergstra/,a.ponse/}}
  }
\date{}
\begin{document}
\maketitle

\begin{abstract}
Common meadows are fields expanded with a total inverse function.  
Division by zero produces an 
additional value denoted with $\bot$ that propagates through all operations 
of the meadow signature (this additional value can be interpreted as an error element).
We provide a basis theorem for so-called common cancellation meadows of characteristic zero, 
that is, common meadows of characteristic zero that admit a certain cancellation law.
\\[4mm]
\emph{Keywords and phrases:}
Meadow, common meadow, division by zero, additional value, abstract datatype.
\end{abstract}

{\small\tableofcontents}

\section{Introduction}
\label{sec:Intro}
Elementary mathematics is uniformly taught around the world with a focus on
natural numbers, integers, fractions, and fraction calculation. 
The mathematical basis of that part of mathematics seems to reside in the field of 
rational numbers. 
In elementary teaching material the incorporation of rational numbers in a field is 
usually not made explicit. 
This leaves open the possibility that some other abstract datatype or some 
alternative abstract datatype specification improves upon fields in providing a 
setting in which such parts of elementary mathematics can be formalized.

In this paper we will propose the signature for | and model class of | 
\emph{common meadows} 
and we will provide a \emph{loose} algebraic specification of common meadows 
by way of a set of equations. 
In the terminology of Broy and Wirsing~\cite{BW81,Wir90}, the semantics of a loose 
algebraic specification $S$ is given by the class of all models of $S$, that is, the 
semantic approach is not restricted to the isomorphism class of initial algebras.
For a loose specification it is expected that its initial algebra is an important 
member of its model class, worth of independent investigation. 
In the case of common meadows this aspect is discussed in the last remark 
of Section~\ref{sec:4} (Concluding remarks).

A common meadow (using inversive notation) is an extension of a field equipped with a 
multiplicative inverse function $(...)^{\inverse 1}$ and an additional element 
$\bot$ that serves as the inverse of zero and propagates through all operations. 
It should be noticed that the use of the constant $\bot$ is a matter of convenience 
only because it merely constitutes a derived constant with defining equation 
$\bot = 0^{-1}$. 
This implies that all uses of $\bot$ can be removed from the story of common meadows 
(a further comment on this can be found in Section~\ref{sec:4}).

The inverse function of a common meadow is not an involution because 
$(0^{\inverse 1})^{\inverse 1}=\bot$.
We will refer to meadows with zero-totalized inverse, that is, $0^{\inverse 1}=0$,
as \emph{involutive meadows} because inverse becomes an involution. 
By default a ``meadow'' is assumed to be an involutive meadow.

The key distinction between meadows and fields, which we consider to be so 
important that it justifies a different name, is the presence of an operator symbol 
for inverse in the signature (inversive notation, see~\cite{BM2011a}) or for 
division (divisive notation, see~\cite{BM2011a}), 
where divisive notation $x/y$ is defined as $x\cdot y^{\inverse 1}$.
A major consequence is that fractions can be viewed as terms over the 
signature of (common) meadows. 
Another distinction between meadows and fields is that we do not require a meadow 
to satisfy the separation axiom $0\ne 1$.

The paper is structured as follows: below we conclude this section
with a brief introduction to some aspects of involutive meadows that will play
a role later on, and a discussion on why common meadows can be preferred over
involutive meadows. In Section~\ref{sec:2} we formally define common meadows and
present some elementary results. In Section~\ref{sec:3} we define ``common 
cancellation meadows'' and provide a basis theorem for common cancellation meadows 
of characteristic zero, which we consider our main result. Section~\ref{sec:4} 
contains some concluding remarks.

\subsection{Common Meadows versus Involutive Meadows}
\label{subsec:versus}
Involutive meadows, where instead of choosing $1/0 = \bot$, one calculates with 
$1/0 = 0$, 
constitute a different solution to the question how to deal with the value of 
$1/0$  once the design decision has been made to work with the signature of meadows, 
that is to include a function name for inverse or for division (or both) in
an extension of the syntax of fields. 
Involutive meadows feature a definite advantage over
common meadows in that, by avoiding an extension of the domain with an additional 
value, theoretical work is very close to classical algebra of fields. 
This conservation property, conserving the domain, of involutive meadows
has proven helpful for the development of theory about 
involutive meadows in~\cite{BBP2013a,BBP2013,BP2013,BM2011a,BR10,BT07}.
Earlier and comparable work on the equational 
theory of fields was done by Komori~\cite{K75} and Ono~\cite{Ono83}: in 1975, 
Komori introduced the name \emph{desirable pseudo-field} for what was introduced 
as a ``meadow'' in~\cite{BT07}.\footnote{\cite{BT07} was published in 2007; the 
  finding of~\cite{K75,Ono83} is mentioned 
  in~\cite{BM2011a} (2011) and was found via Ono's 1983-paper~\cite{Ono83}.}
  
An equational axiomatization $\Md$ of involutive meadows is given in 
Table~\ref{tab:Md}, where $^{\inverse 1}$ binds stronger than $\cdot$, which in 
turn binds stronger than $+$.
From the axioms in $\Md$ the following equations are derivable:
\begin{align*}
0\cdot x  &= 0,
&0^{-1}  &= 0,\\
x\cdot (-y) &= -(x\cdot y),
&(-x)^{-1} &= -(x^{-1}),
\\
-(-x) &= x,&(x \cdot  y)^{-1}
&= x^{-1} \cdot  y^{-1}.
\end{align*}

\begin{table}[t]
\centering
\hrule
\begin{align*}
(x+y)+z &= x + (y + z)
&x \cdot  y &= y \cdot  x\\
x+y &= y+x
&1\cdot x &= x\\
x+0 &= x
&x \cdot  (y + z) &= x \cdot  y + x \cdot  z\\
x+(-x) &= 0
&(x^{-1})^{-1} &= x \\
(x \cdot y) \cdot  z &= x \cdot  (y \cdot  z)
&x \cdot (x \cdot x^{-1}) &= x
\end{align*}
\hrule
\caption{The set $\Md$ of axioms for (involutive) meadows}
\label{tab:Md}
\end{table}

Involutive \emph{cancellation meadows} are involutive
meadows in which the following cancellation law holds:
\begin{equation}
\label{eq:CL}
\tag{$\CL$}
(x\ne 0 \wedge x\cdot y =x\cdot z) \rightarrow y=z.
\end{equation}
Involutive cancellation meadows form an important subclass of involutive meadows:
in~\cite[Thm.3.1]{BBP2013} it is shown that the axioms in 
Table~\ref{tab:Md} constitute a complete axiomatization of the equational theory 
of involutive cancellation meadows.
We will use a consequence of this result in Section~\ref{sec:3}.

A definite disadvantage of involutive meadows against common meadows is that 
$1/0 = 0$ is quite remote from common intuitions regarding the partiality of 
division. 

\subsection{Motivating a Preference for Common Meadows}
Whether common meadows are to be preferred over involutive meadows 
depends on the applications one may have in mind. 
We envisage as an application area the development of alternative foundations of 
elementary mathematics from a perspective of abstract datatypes, term rewriting, 
and mathematical logic. 
For that objective we consider common meadows to be the preferred option over 
involutive meadows. 
At the same time it can be acknowledged
that a systematic investigation of involutive meadows constitutes a necessary 
stage in the development of a theory of common meadows by facilitating in a 
simplified setting the determination of results which might be obtained about 
common meadows. Indeed each result about involutive meadows seems to suggest a 
(properly adapted) counterpart in the setting of common meadows, while proving or 
disproving such counterparts is not an obvious matter.

\section{Common Meadows}
\label{sec:2}
In this section we formally define ``common meadows'' by fixing their signature
and providing an equational axiomatization. Then, we consider some  conditional 
equations that follow from this axiomatization. 
Finally, we discuss some conditional laws that can be used to define
an important subclass of common meadows.
 
\subsection{Meadow Signatures}
The signature $\Sigma_{f}^S$ of fields (and rings) contains a sort (domain) $S$, 
two constants 0, and 1, two two-place functions $+$ (addition) and $\cdot$ 
(multiplication) and the one-place function $-$  (minus) for the inverse of addition.

We write $\Sigma_{\md}^S$ for the signature of meadows in inversive notation: 
\[\Sigma_{\md}^S = \Sigma_f^S \cup \{\_^{-1}\colon S \rightarrow S\},\]
and we write $\Sigma_{\md,\bot}^S$ for the signature of meadows in inversive notation 
with an $\bot$-totalized inverse operator: 
\[\Sigma_{\md,\bot}^S = \Sigma_{\md}^S \cup \{\bot \colon S\}.\]
The interpretation of $\bot$ is called the additional value and we write $\bota$ 
for this value.
Application of any function to the additional value returns that same value. 

When the name of the carrier is fixed it need not be mentioned explicitly in a 
signature. 
Thus, with this convention in mind, $\Sigma_{\md}$ represents $\Sigma_{\md}^S$ 
and so on.
If we want to make explicit that we consider terms over some signature 
$\Sigma$ with variables in set $X$, we write $\Sigma(X)$.

Given a field several meadow signatures and meadows can be connected with it. 
This will now be exemplified with the field $\Rat$ of rational numbers.
The following meadows are distinguished in this case:
\begin{description}
\item[$\Rat_0,$] the meadow of rational numbers with zero-totalized inverse: 
$\Sigma(\Rat_0) = \Sigma_{\md}^\Rat$.

\item[$\Rat_\bot,$] the meadow of rational numbers with $\bot$-totalized inverse:
$\Sigma(\Rat_\bot) = \Sigma_{\md,\bot}^{\Rat_a}$. 
The additional value $\bota$ interpreting $\bot$ has been taken outside $|\Rat|$ 
so that $|\Rat_{\bota}| = |\Rat| \cup \{\bota\}$.
\end{description}

\subsection{Axioms for Common Meadows}

\begin{table}[t]
\centering
\hrule
\begin{align}
\label{Ma1}
(x+y)+z &= x + (y + z)\\
\label{Ma2}
x+y     &= y+x\\
\label{Ma3}
x+0     &= x\\
\label{Ma4}
x+(-x)  &= 0 \cdot x \\
\label{Ma5}
(x \cdot y) \cdot  z &= x \cdot  (y \cdot  z)\\
\label{Ma6}
x \cdot  y &= y \cdot  x\\
\label{Ma7}
1\cdot x &= x \\
\label{Ma8}
x \cdot  (y + z) &= x \cdot  y + x \cdot  z\\[0mm]
\label{Ma9}
-(-x) &= x\\
\label{Ma10}
x \cdot x^{\inverse 1} &= 1  + 0 \cdot x^{\inverse 1}\\
\label{Ma11}
(x \cdot y)^{\inverse 1} &= x^{\inverse  1} \cdot y^{\inverse 1}\\
\label{Ma12}
(1+ 0 \cdot x)^{\inverse 1}&=1+ 0 \cdot x\\[0mm]
\label{Ma13}
0^{\inverse 1} &= \bot \\
\label{Ma14}
x + \bot &= \bot
\end{align}
\hrule
\caption{$\Md_\bot$, an independent set of axioms for common meadows}
\label{Mda}
\end{table}

The axioms in Table~\ref{Mda} define the class (variety) of \emph{common meadows}, 
where we adopt the convention that $\_^{\inverse 1}$ binds stronger than $\cdot$, 
which in turn binds stronger than $+$.
Some comments: Axiom~\eqref{Ma14} implies $\bot$'s propagation
through all operations, and for the same reason, axiom~\eqref{Ma10} has its
particular form. Axiom~\eqref{Ma4} is a variant of the 
common axiom on additional inverse, which also serves $\bot$'s propagation. 
Axioms~\eqref{Ma11} and \eqref{Ma12} are further 
equations needed for manipulation of $(...)^{-1}$-expressions. 
 
We note that the axiom set $\Mda$ is independent,
which can be easily demonstrated with the tool \emph{Mace4}, and that 
the typical
identities for common meadows established by
the following Propositions~\ref{prop:1} and~\ref{prop:2a}
were checked with the theorem prover 
\emph{Prover9}, see~\cite{McCune} for both these tools.

\begin{proposition}
\label{prop:1}
Equations that follow from $\Md_{\bot}$ (see Table~\ref{Mda}): 
\begin{align}
\label{e1}\tag{e1}
0\cdot 0&=0
\\
\label{e2}\tag{e2}
0 \cdot (x \cdot x) &= 0 \cdot x
\\
\label{e3}\tag{e3}
-0&=0,
\\
\label{e4}\tag{e4}
0\cdot x&=0\cdot (-x)
\\
\label{e5}\tag{e5}
0\cdot (x\cdot y)&=0\cdot (x+y)
\\
\label{e6}\tag{e6}
-(x\cdot y)&=x\cdot (-y)
\\
\label{e7}\tag{e7}
(-x)\cdot (-y)&=x\cdot y
\\
\label{e8}\tag{e8}
(-1)\cdot x&=-x
\\
\label{e9}\tag{e9}
1^{-1}&=1
\\
\label{e10}\tag{e10}
(x\cdot x^{\inverse 1})\cdot x^{\inverse 1}&=x^{\inverse 1}
\\
\label{e11}\tag{e11}
(-x)^{\inverse 1}&=-(x^{\inverse 1})
\\
\label{e12}\tag{e12}
(x \cdot x^{\inverse 1})^{\inverse 1}&=x \cdot x^{\inverse 1}
\\
\label{e13}\tag{e13}
(x^{\inverse 1})^{\inverse 1}&=x + 0 \cdot x^{\inverse 1}
\end{align}
and
\begin{align}
\label{e14}\tag{e14}
x \cdot \bot &= \bot
\\
\label{e15}\tag{e15}
-\bot&=\bot
\\
\label{e16}\tag{e16}
\bot^{\inverse 1}&=\bot
\end{align}
\end{proposition}

\begin{proof}
Most derivations are trivial. 
\begin{description}

\item[$\eqref{e1}.$]
By axioms~\eqref{Ma3}, \eqref{Ma7}, \eqref{Ma8}, \eqref{Ma2} we find
$x=(1+0)\cdot x=x+0\cdot x=0\cdot x + x$, hence 
$0=0\cdot 0+0$, so by axiom~\eqref{Ma3}, $0=0\cdot 0$.

\item[$\eqref{e2}.$]
First derive $0\cdot (x\cdot x)=(0\cdot x)\cdot(0\cdot x)$ and
$0\cdot x=0\cdot 1+0\cdot 0\cdot x=0\cdot(1+ 0\cdot x)$.
Hence, 
$0\cdot (x\cdot x)
=0\cdot (1+0\cdot x)\cdot(1+0\cdot x)
\stackrel{\eqref{Ma12}}=0\cdot (1+0\cdot x)\cdot(1+0\cdot x)^{\inverse 1}
\stackrel{\eqref{Ma10}}=0\cdot(1+0\cdot(1+0\cdot x)^{\inverse 1})
\stackrel{\eqref{Ma12}}=0\cdot(1+0\cdot(1+0\cdot x))
=0\cdot(1+0\cdot x)=0\cdot x$.

\item[$\eqref{e3}.$]
By axioms~\eqref{Ma3}, \eqref{Ma2}, \eqref{Ma4} and \eqref{e1} we find
$-0=(-0)+0=0+(-0)=0\cdot 0=0$.

\item[$\eqref{e4}.$]
By axioms~\eqref{Ma2}, \eqref{Ma4}, \eqref{Ma9} we find
$0\cdot x=x+(-x)=(-x)+-(-x)=0\cdot (-x)$.

\item[$\eqref{e5}.$]
First note $0\cdot x+0\cdot x=(0+0)\cdot x = 0\cdot x$.
By axioms~$\eqref{Ma2}-\eqref{Ma4}$, \eqref{Ma6}, \eqref{Ma8}, and~\eqref{e2}
we find
$0\cdot (x+y)=0\cdot ((x+y)\cdot(x+y))
=(0\cdot x+0\cdot(x\cdot y))+(0\cdot y+0\cdot(x\cdot y))
=(0+ 0\cdot y)\cdot x+(0+0\cdot x)\cdot y=0\cdot(x\cdot y)+0\cdot(x\cdot y)=
0\cdot(x\cdot y)$.

\item[$\eqref{e6}.$] We give a detailed derivation:
\begin{align*}
-(x\cdot y)&=-(x\cdot y)+0\cdot -(x\cdot y)
&&\text{by $x=x+0\cdot x$}\\
&=-(x\cdot y)+0\cdot(x\cdot y)
&&\text{by \eqref{e4}}\\
&=-(x\cdot y)+x\cdot(0\cdot y)
&&\text{by axioms \eqref{Ma5} and \eqref{Ma6}}\\
&=-(x\cdot y)+x\cdot(y+(-y))
&&\text{by axiom \eqref{Ma4}}\\
&=-(x\cdot y)+(x\cdot y+x\cdot(-y))
&&\text{by axiom \eqref{Ma8}}\\
&=(-(x\cdot y)+x\cdot y)+x\cdot(-y)
&&\text{by axiom \eqref{Ma1}}\\
&=0\cdot(x\cdot y)+x\cdot(-y)
&&\text{by axioms \eqref{Ma2} and \eqref{Ma4}}\\
&=0\cdot(x\cdot -y)+x\cdot(-y)
&&\text{by axioms \eqref{Ma6} and \eqref{Ma5}, and \eqref{e4}}\\
&=x\cdot(-y).
&&\text{by $x=0\cdot x+x$}
\end{align*}

\item[$\eqref{e7}.$]
By \eqref{e6},
$(-x)\cdot (-y)=-((-x)\cdot y)=-(y\cdot (-x))=-(-(y\cdot x))=x\cdot y$.

\item[$\eqref{e8}.$]
From \eqref{e6} with $y=1$ we find $-x=-(x\cdot 1)=x\cdot (-1)=(-1)\cdot x$.

\item[$\eqref{e9}.$]
By~\eqref{e1} and axioms~\eqref{Ma3} and \eqref{Ma12},
$1^{-1}=(1+0\cdot0)^{-1}=1+0\cdot0=1$.

\item[$\eqref{e10}.$]
By axioms~\eqref{Ma10} and~\eqref{e2}, $(x\cdot x^{\inverse 1})\cdot x^{\inverse 1}=
(1+0\cdot x^{\inverse 1})\cdot x^{\inverse 1}=x^{\inverse 1}+0\cdot x^{\inverse 1}
=x^{\inverse 1}$.

\item[$\eqref{e11}.$]
By \eqref{e10} and \eqref{e7}, $(-1)^{-1}=-1\cdot (-1)^{-1}\cdot(-1)^{-1}=-1\cdot 
((-1)\cdot(-1))^{-1}=-1\cdot 1^{-1}=-1$.
Hence, $(-x)^{-1}=(-1\cdot x)^{-1}=(-1)^{-1}\cdot x^{-1}=-1\cdot x^{-1}
=-(x^{-1})$.

\item[$\eqref{e12}.$]
By axioms~\eqref{Ma10} and~\eqref{Ma12},
$x \cdot x^{\inverse 1} = 1 + 0 \cdot x^{\inverse 1} = (1 + 0\cdot 
x^{\inverse 1})^{\inverse1} = (x \cdot x^{\inverse 1})^{\inverse 1}$.

\item[$\eqref{e13}.$]
By~\eqref{e10}, $(x^{\inverse 1})^{\inverse1} =x^{\inverse1} \cdot  
(x^{\inverse1})^{\inverse1} \cdot (x^{\inverse1})^{\inverse1} 
\stackrel{\eqref{Ma11}}=
(x \cdot  x^{\inverse1})^{\inverse1} \cdot (x^{\inverse1})^{\inverse1}
\stackrel{\eqref{e12}}=
(x \cdot  x^{\inverse1}) \cdot (x^{\inverse1})^{\inverse1} = x \cdot (x \cdot 
x^{\inverse1})^{\inverse1} 
\stackrel{\eqref{e12}}= x \cdot  (x \cdot x^{\inverse1}) = x\cdot (1 + 0\cdot 
x^{\inverse1}) =x + 0\cdot x\cdot x^{\inverse 1} =
 x + 0\cdot ( 1 + 0 \cdot x^{\inverse 1}) = x+ 0 \cdot x^{\inverse1}.$

\item[$\eqref{e14}.$]
By axioms~\eqref{Ma8} and~\eqref{Ma14}, $\bot\cdot(1+x)=\bot+\bot\cdot x=\bot$, hence 
$\bot\cdot x=\bot\cdot((1+(-1))+x)=\bot\cdot(1+(-1+x))=\bot$, and thus 
$x\cdot\bot=\bot$ by axiom~\eqref{Ma6}.

\item[$\eqref{e15}.$]
By axioms~\eqref{Ma6} and~\eqref{Ma5}, and~\eqref{e6} and~\eqref{e14}, 
$-\bot=-(\bot\cdot 1)=\bot\cdot(-1)=\bot$.

\item[$\eqref{e16}.$]
By axioms~\eqref{Ma13} and~\eqref{Ma11},  and \eqref{e14},
$\bot^{\inverse 1}=(0 \cdot \bot) ^{\inverse 1} =0 ^{\inverse 1} \cdot 
\bot^{\inverse 1}= \bot \cdot \bot^{\inverse 1}=\bot$.
\end{description}
\end{proof}

The next proposition establishes a generalization of a familiar identity concerning  
the addition of fractions.

\begin{proposition}
\label{prop:2a}
$\Mda\vdash \displaystyle x \cdot y^{\inverse  1}  + u \cdot v^{\inverse  1}
= (x \cdot v + u \cdot y) \cdot (y \cdot v)^{\inverse  1}$.
\end{proposition}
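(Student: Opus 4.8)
The plan is to bring both sides to a common normal form of the shape
$x\cdot y^{\inverse 1} + u\cdot v^{\inverse 1} + 0\cdot x + 0\cdot u + 0\cdot y^{\inverse 1} + 0\cdot v^{\inverse 1}$,
exploiting the fact (used already in the derivation of~\eqref{e1} and invoked in the proof of Proposition~\ref{prop:1}) that $t = t + 0\cdot t$ is derivable from $\Mda$, together with the identity~\eqref{e4} and the immediate consequence $0\cdot(a\cdot b) = 0\cdot(a+b) = 0\cdot a + 0\cdot b$ obtained from~\eqref{e4} and~\eqref{eq:8}.

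For the left-hand side, I would first observe that $x\cdot y^{\inverse 1} = x\cdot y^{\inverse 1} + 0\cdot(x\cdot y^{\inverse 1})$ and that $0\cdot(x\cdot y^{\inverse 1}) = 0\cdot x + 0\cdot y^{\inverse 1}$, so that $x\cdot y^{\inverse 1} = x\cdot y^{\inverse 1} + 0\cdot x + 0\cdot y^{\inverse 1}$, and symmetrically $u\cdot v^{\inverse 1} = u\cdot v^{\inverse 1} + 0\cdot u + 0\cdot v^{\inverse 1}$. Adding these two identities and rearranging with commutativity~\eqref{eq:2} and associativity~\eqref{eq:1} of $+$ puts the left-hand side into the claimed normal form.

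For the right-hand side, I would expand $(y\cdot v)^{\inverse 1} = y^{\inverse 1}\cdot v^{\inverse 1}$ by axiom~\eqref{eq:13}, distribute over $x\cdot v + u\cdot y$ using~\eqref{eq:8}, and regroup the factors via~\eqref{eq:5} and~\eqref{eq:6} to get $x\cdot y^{\inverse 1}\cdot(v\cdot v^{\inverse 1}) + u\cdot v^{\inverse 1}\cdot(y\cdot y^{\inverse 1})$. Rewriting $v\cdot v^{\inverse 1} = 1 + 0\cdot v^{\inverse 1}$ and $y\cdot y^{\inverse 1} = 1 + 0\cdot y^{\inverse 1}$ with axiom~\eqref{eq:12}, then distributing once more and using~\eqref{eq:7}, this becomes $x\cdot y^{\inverse 1} + u\cdot v^{\inverse 1} + 0\cdot(x\cdot y^{\inverse 1}\cdot v^{\inverse 1}) + 0\cdot(u\cdot v^{\inverse 1}\cdot y^{\inverse 1})$. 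Flattening each of the two error terms by iterating $0\cdot(a\cdot b) = 0\cdot a + 0\cdot b$ turns them into sums of $0\cdot x$, $0\cdot u$, $0\cdot y^{\inverse 1}$, $0\cdot v^{\inverse 1}$; the duplicated summands collapse because $0\cdot t + 0\cdot t = (0+0)\cdot t = 0\cdot t$ (using~\eqref{eq:8} and $0+0=0$ from~\eqref{eq:3}). The result is exactly the same normal form as for the left-hand side, which finishes the proof.

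The only place where real care is needed is the bookkeeping of the error terms $0\cdot(\dots)$: one must carry them correctly through the repeated applications of commutativity and associativity of $+$, and be willing to iterate~\eqref{e4} to rewrite a product $0\cdot(a\cdot b\cdot c)$ as $0\cdot a + 0\cdot b + 0\cdot c$. No idea beyond Proposition~\ref{prop:1} is required; in particular, the semantic case distinction ``$y=0$ or $y\neq 0$'' (and likewise for $v$) is avoided entirely, since the summand $0\cdot y^{\inverse 1}$ already records whether $y$ is invertible and is absorbed on both sides.
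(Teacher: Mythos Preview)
Your proof is correct and follows essentially the same route as the paper: expand $(y\cdot v)^{\inverse 1}$ via axiom~\eqref{eq:13}, distribute, rewrite $y\cdot y^{\inverse 1}$ and $v\cdot v^{\inverse 1}$ with axiom~\eqref{eq:12}, and then clean up the resulting $0\cdot(\dots)$ summands using~\eqref{e4}. The paper packages the clean-up into a small lemma $x\cdot y\cdot y^{\inverse 1}=x+0\cdot y^{\inverse 1}$ (so the terms $0\cdot x$ and $0\cdot u$ never appear), whereas you carry all error summands to a common normal form and collapse duplicates with $0\cdot t+0\cdot t=0\cdot t$; this is only an organizational difference.
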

 
\begin{proof} We first derive 
\begin{align}
\nonumber
x \cdot y \cdot y^{\inverse  1}
&=x\cdot(1+0\cdot y^{\inverse  1})
&&\text{by axiom~\eqref{Ma10}}\\
\nonumber
&=x+0\cdot x\cdot y^{\inverse  1}\\
\nonumber
&=x+0\cdot x+0\cdot y^{\inverse  1}
&&\text{by~\eqref{e5}}\\
\label{eq:cv}
&=x+0\cdot y^{\inverse  1}.
\end{align}
Hence,
\begin{align*}
(x \cdot v + u \cdot y) \cdot (y \cdot v)^{\inverse  1}&=
x \cdot y^{\inverse  1} \cdot v \cdot v^{\inverse  1}+
u \cdot v^{\inverse  1} \cdot y \cdot y^{\inverse  1}\\
&=(x \cdot y^{\inverse  1}+0\cdot v^{\inverse  1})+
(u \cdot v^{\inverse  1} +0 \cdot y^{\inverse  1})
&&\text{by \eqref{eq:cv}}\\
&=(x \cdot y^{\inverse  1}+0\cdot y^{\inverse  1})+
(u \cdot v^{\inverse  1} +0 \cdot v^{\inverse  1})\\
&=x \cdot y^{\inverse  1}+u \cdot v^{\inverse  1}.
\end{align*}
\end{proof}

We end this section with two more propositions that characterize typical
properties of common meadows and that are used in the proof of 
Theorem~\ref{thm:basis}. The first of these establishes that each 
(possibly open) 
term over $\Sigma_{\md,\bot}$ has a simple representation in the syntax of meadows.
 
\begin{proposition}
\label{prop:3}
For each term $t$ over $\Sigma_{\md,\bot}(X)$ with variables in $X$
there exist terms $r_1,r_2$ over $\Sigma_f(X)$ such that 
$\Md_\bot\vdash t=r_1\cdot r_2^{-1}$ and $\VAR(t)=\VAR(r_1)\cup\VAR(r_2)$.
\end{proposition}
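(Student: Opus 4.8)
The plan is to prove the statement by structural induction on $t$, showing in each case how to combine the "normal forms" $r_1 \cdot r_2^{-1}$ of the immediate subterms into a normal form for $t$, while maintaining the variable-tracking invariant $\VAR(t)=\VAR(r_1)\cup\VAR(r_2)$.

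For the base cases, a variable $x$ is written as $x\cdot 1^{-1}$ using axiom~\eqref{eq:14}, the constants $0,1$ as $0\cdot 1^{-1}$ and $1\cdot 1^{-1}$, and $\bot$ as $0\cdot 0^{-1}$ using axiom~\eqref{eq:15}; in each base case the variable condition is immediate. For the inductive step, suppose $t_i = r_{i,1}\cdot r_{i,2}^{-1}$ for $i=1,2$ with $\VAR(t_i)=\VAR(r_{i,1})\cup\VAR(r_{i,2})$. For $t=t_1+t_2$, Proposition~\ref{prop:2a} gives directly $\Mda\vdash t = (r_{1,1}\cdot r_{2,2}+r_{2,1}\cdot r_{1,2})\cdot(r_{1,2}\cdot r_{2,2})^{-1}$, and the numerator and denominator are visibly $\Sigma_f(X)$-terms whose combined variables are exactly $\VAR(t_1)\cup\VAR(t_2)$. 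For $t=t_1\cdot t_2$, use commutativity/associativity of $\cdot$ together with axiom~\eqref{eq:13} to get $t = (r_{1,1}\cdot r_{2,1})\cdot(r_{1,2}\cdot r_{2,2})^{-1}$, again with the correct variables. For $t=-t_1$, use \eqref{e5} (or \eqref{e6}) to write $t = (-r_{1,1})\cdot r_{1,2}^{-1}$. For $t=(t_1)^{-1}$, we have $t = (r_{1,1}\cdot r_{1,2}^{-1})^{-1} = r_{1,1}^{-1}\cdot(r_{1,2}^{-1})^{-1}$ by \eqref{eq:13}, and then axiom~\eqref{eq:11} rewrites $(r_{1,2}^{-1})^{-1}$ as $r_{1,2}+0\cdot r_{1,2}^{-1}$; so $t = r_{1,1}^{-1}\cdot(r_{1,2}+0\cdot r_{1,2}^{-1})$. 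This still contains inverse operators, so it is not yet of the required shape.

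The main obstacle, therefore, is the inverse case: after one application of the above we still have inverses applied to $\Sigma_f$-terms ($r_{1,1}^{-1}$ and $r_{1,2}^{-1}$), and we need to push all inverses down to act only on a single combined denominator. The idea is first to establish an auxiliary fact: for a $\Sigma_f(X)$-term $s$, the term $s^{-1}$ can be rewritten by $\Mda$ into the form $s'\cdot s''^{-1}$ with $s',s''$ over $\Sigma_f(X)$ and $\VAR(s')\cup\VAR(s'')=\VAR(s)$ — but this is exactly the $t=x$-type situation only when $s$ is already a single inverse-free term, in which case $s^{-1} = 1\cdot s^{-1}$ works trivially. So the real work is: given $t_1 = r_{1,1}\cdot r_{1,2}^{-1}$, express $t_1^{-1}$ directly. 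Here I would use \eqref{eq:13} to get $t_1^{-1} = r_{1,1}^{-1}\cdot (r_{1,2}^{-1})^{-1}$, then axiom~\eqref{eq:11} on the second factor, then distribute and regroup using Proposition~\ref{prop:2a}-style manipulation on $r_{1,1}^{-1}\cdot r_{1,2} + r_{1,1}^{-1}\cdot 0\cdot r_{1,2}^{-1}$; since $r_{1,1}^{-1}$ and $r_{1,2}^{-1}$ are single inverses over $\Sigma_f$-terms, one application of the addition-of-fractions identity collapses everything to numerator $\cdot$ $(r_{1,1}\cdot r_{1,2})^{-1}$ with an inverse-free numerator. Checking that the resulting numerator and denominator are genuinely $\Sigma_f(X)$-terms with the right variable set is then a bookkeeping matter, using that $\VAR(r_{1,1})\cup\VAR(r_{1,2})=\VAR(t_1)$ and that $0\cdot s$ contributes no new variables beyond those of $s$.

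Throughout, the variable-condition $\VAR(t)=\VAR(r_1)\cup\VAR(r_2)$ must be verified alongside the equational claim; it holds in every case because none of the axioms or derived identities invoked ($\eqref{eq:13}$, $\eqref{eq:11}$, $\eqref{eq:12}$, $\eqref{e5}$, Proposition~\ref{prop:2a}) introduces or discards variables — every subterm of $0\cdot s$, $1^{-1}$, etc.\ that appears has its variables already accounted for. I expect the write-up to be dominated by the inverse case; the $+$ and $\cdot$ cases are one-liners given Proposition~\ref{prop:2a} and axiom~\eqref{eq:13}.
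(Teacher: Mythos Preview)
Your proposal is correct and follows essentially the same approach as the paper: structural induction, with the base cases handled via $1^{-1}=1$ and $\bot=0\cdot 0^{-1}$, the $+$ case via Proposition~\ref{prop:2a}, the $\cdot$ case via axiom~\eqref{eq:13}, the $-$ case via \eqref{e5}, and the inverse case via \eqref{eq:13} then \eqref{eq:11} followed by distribution and a final appeal to Proposition~\ref{prop:2a}. The paper's treatment of the inverse case is slightly crisper than your sketch: after distributing $r_{1,1}^{-1}\cdot(r_{1,2}+0\cdot r_{1,2}^{-1})$ it uses \eqref{e4} to split $0\cdot r_{1,1}^{-1}\cdot r_{1,2}^{-1}$ into $0\cdot r_{1,1}^{-1}+0\cdot r_{1,2}^{-1}$, absorbs $0\cdot r_{1,1}^{-1}$ back into $r_{1,2}\cdot r_{1,1}^{-1}$, and is left with $r_{1,2}\cdot r_{1,1}^{-1}+0\cdot r_{1,2}^{-1}$, to which Proposition~\ref{prop:2a} applies directly --- so your ``auxiliary fact'' detour is unnecessary, but the endpoint is the same.
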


\begin{proof}
By induction on the structure of $t$, where the $\VAR(t)$-property follows 
easily in each case.

\begin{description}
\item[\normalfont If $t\in\{0,1,x,\bot\}$,]this follows trivially
(for the first three cases use $1^{-1}=1$).

\item[\normalfont Case $t\equiv t_1+t_2$.]
By Proposition~\ref{prop:2a}.
 
\item[\normalfont Case $t\equiv t_1\cdot t_2$.] Trivial. 

\item[\normalfont Case $t\equiv-t_1$.] By Proposition~\ref{prop:1}  (\ref{e6}).

\item[\normalfont Case $t\equiv t_1^{\inverse 1}$.] By induction there exist
$r_i\in \Sigma_f(X)$ such that $\Md_\bot\vdash t_1=r_1\cdot r_2^{\inverse 1}$. 
Now derive 
$t_1^{\inverse 1}=r_1^{\inverse 1}\cdot(r_2^{\inverse 1})^{\inverse 1}
=r_1^{\inverse 1}\cdot(r_2+0\cdot r_2^{\inverse 1})
=r_2\cdot r_1^{\inverse 1}+0\cdot r_1^{\inverse 1}+ 0\cdot r_2^{\inverse 1}
=r_2\cdot r_1^{\inverse 1}+ 0\cdot r_2^{\inverse 1}
$ and apply Proposition~\ref{prop:2a}.
\end{description}
\end{proof}

\newpage
The next proposition shows how a term of the form $0\cdot t$ with $t$ a
(possibly open) term over $\Sigma_f(X)$ can  be simplified (note that 
$0\cdot x = 0$ is not valid, since $0\cdot\bot=\bot$).
\begin{proposition}
\label{prop:3a}
For each term $t$ over $\Sigma_f(X)$, 
$\Md_\bot\vdash 0\cdot t=0\cdot \sum_{x\in\VAR(t)}x$, where 
$\sum_{x\in\emptyset}x=0$.
\end{proposition}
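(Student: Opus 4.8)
The plan is to argue by induction on the structure of the term $t\in\Sigma_f(X)$. The base cases are immediate: if $t\equiv 0$ both sides are literally $0\cdot 0$; if $t\equiv x$ is a variable both sides are literally $0\cdot x$; and if $t\equiv 1$ then $\VAR(t)=\emptyset$, so the right-hand side is $0\cdot 0$, which equals $0$ by~\eqref{e1}, while the left-hand side is $0\cdot 1=1\cdot 0=0$ by axioms~\eqref{eq:6} and~\eqref{eq:7}.

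Before the inductive cases I would record two auxiliary facts. First, $\Mda\vdash 0\cdot s+0\cdot s=0\cdot s$ for every term $s$, since $0\cdot s+0\cdot s=s\cdot 0+s\cdot 0=s\cdot(0+0)=s\cdot 0=0\cdot s$ by axioms~\eqref{eq:6}, \eqref{eq:8} and~\eqref{eq:3}. Second, using this together with commutativity and associativity of $+$ (axioms~\eqref{eq:1}, \eqref{eq:2}) and distributivity~\eqref{eq:8}, one obtains the \emph{merging identity}: for all finite $A,B\subseteq X$,
\[
\Mda\vdash 0\cdot\textstyle\sum_{x\in A}x \;+\; 0\cdot\sum_{x\in B}x \;=\; 0\cdot\sum_{x\in A\cup B}x,
\]
proved by a secondary induction on $|B|$ --- the empty case uses $\sum_{x\in\emptyset}x=0$ together with~\eqref{e1} and~\eqref{eq:3}, and in the step one splits off one variable $v$ of $B$, applies the induction hypothesis, and then either appends $v$ to the union (if $v$ does not yet occur) or absorbs its contribution via the first auxiliary fact (if $v$ already occurs).

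With these in hand the inductive cases are short. If $t\equiv t_1+t_2$, then by distributivity~\eqref{eq:8} and the induction hypothesis $0\cdot t=0\cdot t_1+0\cdot t_2=0\cdot\sum_{x\in\VAR(t_1)}x+0\cdot\sum_{x\in\VAR(t_2)}x$, and the merging identity with $A=\VAR(t_1)$, $B=\VAR(t_2)$ concludes, since $\VAR(t)=\VAR(t_1)\cup\VAR(t_2)$. If $t\equiv t_1\cdot t_2$, then by~\eqref{e4} we have $0\cdot(t_1\cdot t_2)=0\cdot(t_1+t_2)$, so the previous case applies verbatim. If $t\equiv -t_1$, then by~\eqref{e3} we have $0\cdot(-t_1)=0\cdot t_1$, and the induction hypothesis together with $\VAR(t)=\VAR(t_1)$ gives the claim.

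The only point demanding genuine care is the merging identity --- the bookkeeping that turns a sum of two sums over possibly overlapping variable sets into a single sum over their union. This is precisely where the idempotency $0\cdot s+0\cdot s=0\cdot s$ is indispensable, and it is also why the proposition carries the prefactor $0\cdot{}$ rather than being the (false) statement $t=\sum_{x\in\VAR(t)}x$.
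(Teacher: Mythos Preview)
Your proof is correct and follows the same structural-induction approach as the paper, which merely states ``by induction on the structure of $t$, where identity~\eqref{e4} covers the multiplicative case.'' You have spelled out in detail what the paper leaves implicit --- in particular the idempotency $0\cdot s+0\cdot s=0\cdot s$ and the merging identity for overlapping variable sets --- but the skeleton and the key use of \eqref{e4} and \eqref{e3} are the same.
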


\begin{proof}
By induction on the structure of $t$, where equation~\eqref{e5} 
(Proposition~\ref{prop:1}) covers the multiplicative case.
\end{proof}
 
\subsection{Conditional Equations}
We discuss a number of conditional equations that will turn
out useful,
and we start off with a few that follow directly from $\Mda$.

\begin{proposition}
\label{prop:2}
Conditional equations that follow from $\Md_{\bot}$ (see Table~\ref{Mda}): 
\begin{align}
\label{ce1}\tag{ce1}
x \cdot y = 1 &\rightarrow 0 \cdot y = 0,
\\
\label{ce2}\tag{ce2}
x \cdot y = 1 &\rightarrow x^{\inverse  1} = y,
\\
\label{ce25}\tag{ce3}
0\cdot x=0\cdot y&\rightarrow 0 \cdot (x\cdot y) = 0\cdot x,
\\
\label{ce3}\tag{ce4}
0 \cdot x \cdot y  = 0 &\rightarrow 0 \cdot x = 0,
\\
\label{ce4}\tag{ce5}
0 \cdot (x + y ) = 0 &\rightarrow 0 \cdot x = 0,
\\
\label{ce5}\tag{ce6}
0 \cdot x^{\inverse 1} =0 &\rightarrow 0 \cdot x = 0,
\\
\label{ce7}\tag{ce7}
0 \cdot x = \bot &\rightarrow x = \bot.
\end{align}
\end{proposition}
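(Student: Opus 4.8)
The plan is to reduce all seven implications to a short list of identities already available. From the derivations of~\eqref{e1} and~\eqref{e4} one has, for every term $t$, both $t = t + 0\cdot t$ and $0\cdot t + 0\cdot t = 0\cdot t$; combining~\eqref{e4} with distributivity~\eqref{eq:8} gives $0\cdot(x\cdot y) = 0\cdot x + 0\cdot y$; and $0\cdot 1 = 0$ by~\eqref{eq:6} and~\eqref{eq:7}. The workhorse is the elementary observation that if $u + u = u$ and $u + v = 0$, then $u = u + 0 = u + (u + v) = (u + u) + v = u + v = 0$; I will call this the \emph{idempotent-summand principle}. Since every subterm $0\cdot t$ is $+$-idempotent, any equation of the form $0\cdot t + s = 0$ forces $0\cdot t = 0$.

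Given this, the implications about $0\cdot(\,\cdot\,)$ come out immediately. For the hypothesis $0\cdot(x+y) = 0$: rewrite the left side by~\eqref{eq:8} as $0\cdot x + 0\cdot y = 0$ and apply the idempotent-summand principle with $u = 0\cdot x$ (and symmetrically with $u = 0\cdot y$), obtaining $0\cdot x = 0$. For the hypothesis $0\cdot x\cdot y = 0$: rewrite $0\cdot(x\cdot y) = 0\cdot x + 0\cdot y = 0$ and argue the same way. Implication~\eqref{ce1} is then the special case in which $x\cdot y = 1$ gives $0\cdot x\cdot y = 0\cdot 1 = 0$, hence $0\cdot x = 0$ and $0\cdot y = 0$; in particular~\eqref{ce1} together with commutativity yields the symmetric statement $x\cdot y = 1 \rightarrow (0\cdot x = 0 \wedge 0\cdot y = 0)$, which I use freely below. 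Finally, from the hypothesis $0\cdot x = \bot$ one simply computes $x = x + 0\cdot x = x + \bot = \bot$ using~\eqref{eq:16}.

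It remains to treat the three implications mentioning inverse, and here~\eqref{ce1} does the real work. The implication $0\cdot x = 0\cdot y \rightarrow 0\cdot(x\cdot y) = 0\cdot x$ is a one-liner: $0\cdot(x\cdot y) = 0\cdot x + 0\cdot y = 0\cdot x + 0\cdot x = 0\cdot x$. For the hypothesis $0\cdot x^{\inverse 1} = 0$, axiom~\eqref{eq:12} gives $x\cdot x^{\inverse 1} = 1 + 0\cdot x^{\inverse 1} = 1$, so $0\cdot x = 0$ by the symmetric form of~\eqref{ce1}. For~\eqref{ce2}, the one step that is not bookkeeping is to apply $(\,\cdot\,)^{\inverse 1}$ to both sides of the hypothesis $x\cdot y = 1$: axioms~\eqref{eq:13} and~\eqref{eq:14} turn it into $x^{\inverse 1}\cdot y^{\inverse 1} = 1$, whence $0\cdot x^{\inverse 1} = 0$ by~\eqref{ce1} (and $0\cdot y = 0$ already from $x\cdot y = 1$). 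Then, using~\eqref{eq:12} and afterwards~\eqref{e4} with distributivity, $x^{\inverse 1} = x^{\inverse 1}\cdot(x\cdot y) = (x\cdot x^{\inverse 1})\cdot y = (1 + 0\cdot x^{\inverse 1})\cdot y = y + 0\cdot(x^{\inverse 1}\cdot y) = y + (0\cdot x^{\inverse 1} + 0\cdot y) = y$.

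All the calculations are routine; the only ideas are isolating the idempotent-summand principle, which simultaneously handles~\eqref{ce1} and the two implications whose hypothesis has the shape $0\cdot(\,\cdot\,) = 0$, and, for~\eqref{ce2}, inverting both sides of $x\cdot y = 1$ so that~\eqref{ce1} can be brought to bear on $x^{\inverse 1}$. I would present the items in the order: the $0\cdot(x+y)$ implication, the $0\cdot x\cdot y$ implication, then~\eqref{ce1}, then the $0\cdot x = \bot$ implication, the $0\cdot x = 0\cdot y$ implication, the $0\cdot x^{\inverse 1}$ implication, and finally~\eqref{ce2}; I do not anticipate any genuine obstacle.
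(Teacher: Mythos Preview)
Your proposal is correct, and for \eqref{ce2}, \eqref{ce25}, \eqref{ce5}, and \eqref{ce7} it matches the paper's proof almost verbatim. The only point of divergence is the triple \eqref{ce1}, \eqref{ce3}, \eqref{ce4}: the paper proves \eqref{ce1} first by a direct computation using axiom~\eqref{eq:10} (namely $0\cdot x\cdot y = 0\cdot x\cdot y\cdot y = (0\cdot x + 0\cdot y)\cdot y$, then unwinding with the hypothesis $x\cdot y=1$), and afterwards proves \eqref{ce3} by the separate identity $0\cdot x = 0\cdot x + 0\cdot x\cdot y = x\cdot(0+0\cdot y) = 0\cdot(x\cdot y)$, deducing \eqref{ce4} from \eqref{ce3} via~\eqref{e4}. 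You instead first record $0\cdot(x\cdot y)=0\cdot x+0\cdot y$ (an immediate consequence of~\eqref{e4} and distributivity) and isolate the idempotent-summand principle, which then dispatches \eqref{ce3} and \eqref{ce4} uniformly and yields \eqref{ce1} as the special case $x\cdot y=1$. Your organization is a bit cleaner and makes the logical dependencies more transparent; the paper's order has the minor advantage that its proof of \eqref{ce1} is self-contained and does not appeal to the earlier identity~\eqref{e4}. Either way the underlying work is the same, since~\eqref{e4} itself rests on axiom~\eqref{eq:10}.
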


\begin{proof}
Most derivations are trivial. 

\begin{description}

\item[$\eqref{ce1}.$]
By equations~\eqref{e2} and \eqref{e5},
$0\cdot x\cdot y=
0\cdot x\cdot y\cdot y
=0\cdot x\cdot y+0\cdot y\cdot y
=(0\cdot x+0\cdot y)\cdot y$,
and hence by assumption,
$0 = 0 \cdot 1 = 0 \cdot x \cdot y = (0 \cdot x + 0 \cdot y) \cdot y =
 0 \cdot x  \cdot y + 0 \cdot y \cdot y = 0 + 0 \cdot y = 0 \cdot y$.

\item[$\eqref{ce2}.$]
By assumption and axioms~\eqref{Ma11} and \eqref{Ma12}, 
$x^{\inverse 1}\cdot y^{\inverse 1}=1$, and thus by \eqref{ce1}, 
$0\cdot x^{\inverse 1}=0$, so by axiom~\eqref{Ma10}, 
$y=(1+0\cdot x^{\inverse 1})\cdot y=(x\cdot x^{\inverse 1})\cdot y=
(x\cdot y)\cdot x^{\inverse 1}=x^{\inverse 1}$.

\item[$\eqref{ce25}.$]
By assumption, equation~\eqref{e5}, and axiom~\eqref{Ma8}, 
$0\cdot(x\cdot y)=0\cdot x+0\cdot y=0\cdot x + 0\cdot x=0\cdot x$.

\item[$\eqref{ce3}.$]
By assumption, 
$0\cdot x=0\cdot x+0\cdot x\cdot y=x\cdot(0+0\cdot y)=0\cdot(x\cdot y)=0$.

\item[$\eqref{ce4}.$]
Apply equation~\eqref{e5} to \eqref{ce3}.

\item[$\eqref{ce5}.$]
By axiom~\eqref{Ma10} and assumption, 
$x\cdot x^{\inverse 1}=1+0\cdot x^{\inverse 1}=1$, 
so by~\eqref{ce1}, $0\cdot x=0$.

\item[$\eqref{ce7}.$]
By $x=x+0\cdot x$ and assumption, $x = x + \bot = \bot$.
\end{description}
\end{proof}

Note that~\eqref{ce1} and \eqref{ce2} immediately imply
\begin{equation*}
x \cdot y = 1 \rightarrow 0 \cdot x^{\inverse  1} = 0.
\end{equation*}

In Table~\ref{tab:laws} we define various conditional laws that we will use 
to single out certain classes of common meadows in Section~\ref{sec:3}: 
the Normal Value Law ($\NVL$), the Additional Value Law 
($\AVL$), and  the Common Inverse Law ($\CIL$).
Here we use the adjective ``normal'' to express that values different from $\bot$ 
(more precisely, the interpretation of $\bot$) are at stake.
We conclude this section by interrelating these laws.

\begin{table}[t]
\centering
\hrule
\begin{align*}
\tag{$\NVL$}
x \neq \bot& ~\rightarrow~ 0\cdot x =0
&&\text{Normal Value Law}\\
\tag{$\AVL$}
x^{\inverse 1}= \bot& ~\rightarrow~ 0\cdot x =x
&&\text{Additional Value Law}\\
\tag{$\CIL$}
x \neq 0 \wedge x \neq \bot& ~\rightarrow~ x\cdot x^{-1} =1
&&\text{Common Inverse Law}
\end{align*}
\hrule
\caption{Some conditional laws for common meadows}
\label{tab:laws}
\end{table}

\begin{proposition}
\label{prop:4}~ 
\begin{enumerate}
\item[$1.$]
\label{4.1}
$\Md_\bot+\NVL\vdash (x \cdot y = \bot \wedge x \neq \bot) \rightarrow   y = \bot$,

\item[$2.$]
\label{4.2}
$\Md_\bot+\NVL\vdash x^{\inverse 1}  \neq \bot \rightarrow 0 \cdot x = 0$,

\item[$3.$] 
\label{4.3}
$\Md_\bot+\NVL+\AVL\vdash \CIL$,

\item[$4.$]$\Md_\bot+\CIL\vdash\NVL$,

\item[$5.$]
$\Mda+\CIL\vdash \AVL$.
\end{enumerate}
\end{proposition}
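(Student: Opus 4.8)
The plan is to derive all five items by elementary first-order reasoning over $\Sigma_{\md,\bot}$, using classical case splits (typically on whether some term equals $0$ or equals $\bot$) on top of the identities~\eqref{e1}--\eqref{e10} of Proposition~\ref{prop:1} and the conditional equations of Proposition~\ref{prop:2}; no essentially new computation is needed. The one delicate point is that $\Md_\bot$ includes no separation axiom, so $\bot=1$ (hence a one-element meadow) is not excluded; consequently, wherever one is tempted to argue ``then $x\cdot x^{\inverse 1}\ne 1$, contradiction'' one must instead extract an explicit contradiction with a standing case hypothesis ($x\ne 0$ or $x\ne\bot$). Getting this right in items~3 and~5 is the step I expect to need the most care; everything else is bookkeeping.

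For item~1, assume $x\cdot y=\bot$ and $x\ne\bot$. Then $\NVL$ gives $0\cdot x=0$, so by axioms~\eqref{eq:5} and~\eqref{eq:17} we get $0\cdot y=(0\cdot x)\cdot y=0\cdot(x\cdot y)=0\cdot\bot=\bot$, and \eqref{ce7}, namely $0\cdot x=\bot\rightarrow x=\bot$, yields $y=\bot$. Item~2 is immediate: from $x^{\inverse 1}\ne\bot$, $\NVL$ gives $0\cdot x^{\inverse 1}=0$, and the conditional equation \eqref{ce5}, namely $0\cdot x^{\inverse 1}=0\rightarrow 0\cdot x=0$, then gives $0\cdot x=0$.

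For item~3, assume $x\ne 0$ and $x\ne\bot$; by axiom~\eqref{eq:12} it suffices to prove $0\cdot x^{\inverse 1}=0$, and I would split on whether $x^{\inverse 1}=\bot$. If $x^{\inverse 1}\ne\bot$, apply $\NVL$ to $x^{\inverse 1}$ directly. If $x^{\inverse 1}=\bot$, then $\AVL$ gives $0\cdot x=x$ while $\NVL$ (valid since $x\ne\bot$) gives $0\cdot x=0$, so $x=0$, contradicting $x\ne 0$; this branch is vacuous. Hence $x\cdot x^{\inverse 1}=1+0\cdot x^{\inverse 1}=1$, which is $\CIL$. For item~4, assume $x\ne\bot$: if $x=0$ then $0\cdot x=0\cdot 0=0$ by~\eqref{e1}; if $x\ne 0$, then $\CIL$ gives $x\cdot x^{\inverse 1}=1$, and reading this (via axiom~\eqref{eq:6}) as $x^{\inverse 1}\cdot x=1$, the conditional equation~\eqref{ce1} gives $0\cdot x=0$.

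For item~5, assume $x^{\inverse 1}=\bot$; the goal is $0\cdot x=x$. Suppose, toward a case analysis, that $x\ne 0$ and $x\ne\bot$; then $\CIL$ gives $x\cdot x^{\inverse 1}=1$, so $\bot=x\cdot\bot=1$ by axiom~\eqref{eq:17}, and then $x=1\cdot x=\bot\cdot x=x\cdot\bot=\bot$ by axioms~\eqref{eq:7}, \eqref{eq:6} and~\eqref{eq:17}, contradicting $x\ne\bot$. So $x=0$ or $x=\bot$; in the first case $0\cdot x=0\cdot 0=0=x$ by~\eqref{e1}, in the second $0\cdot x=0\cdot\bot=\bot=x$ by axiom~\eqref{eq:17}. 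Either way $0\cdot x=x$, which is $\AVL$. As flagged above, the only real obstacle throughout is that these case splits must survive the absence of separation --- they do, because in each ``bad'' branch we reach a concrete contradiction with a case hypothesis, and because the degenerate one-element common meadow satisfies $\NVL$, $\AVL$ and $\CIL$ vacuously anyway.
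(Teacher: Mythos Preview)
Your proof is correct and follows essentially the same route as the paper's: the same case splits on $x=0$, $x=\bot$, and $x^{\inverse 1}=\bot$, and the same invocations of \eqref{ce1}, \eqref{ce5}, \eqref{ce7}, and axiom~\eqref{eq:12}. The only cosmetic differences are that in item~2 you cite \eqref{ce5} directly where the paper unpacks it via axiom~\eqref{eq:12} and \eqref{ce1}, and in item~5 you reach the contradiction through $1=\bot$ while the paper multiplies $x\cdot x^{\inverse 1}=1$ by $x$ to obtain $x=\bot$ directly; neither deviation is substantive.
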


\begin{proof}~
\begin{enumerate}
\item
By $\NVL$,
$x\ne \bot\rightarrow 0\cdot x=0$, so $0\cdot y=(0\cdot x)\cdot y=
0\cdot (x\cdot y)=0\cdot\bot=\bot$ 
and hence $y=(1+0)\cdot y=y+0\cdot y=y+\bot=\bot$.

\item
By $\NVL$, $0\cdot x^{\inverse 1}=0$ and hence by axiom~\eqref{Ma10},
$x\cdot x^{\inverse 1}=1$ and by~\eqref{ce1}, $0 \cdot x= 0$.

\item
From $x \neq \bot$ we find $0 \cdot x = 0$. 
There are two cases: $ x^{\inverse 1} = \bot$ which implies by $\AVL$
that $x= 0$  contradicting the assumptions of $\CIL$, and 
$ x^{\inverse 1} \neq \bot$ which implies by $\NVL$ that $0 \cdot x^{\inverse 1}=0$,
and this implies  $x \cdot x^{\inverse 1} =1$ by axiom~\eqref{Ma10}.

\item
Assume that $x \neq \bot$. If $x = 0$ then also $0 \cdot x=0$.
If $x \neq 0$ then by $\CIL$, 
$x \cdot x^{\inverse 1} = 1$, so
$0 \cdot x= 0$ by~\eqref{ce1}.

\item
We distinguish three cases: $x=0$, $x=\bot$, and $x \neq 0 \wedge x \neq \bot$. 
In the first two cases it immediately follows that $0\cdot x=x$. 
In the last case it follows by 
$\CIL$ that $x\cdot x\cdot x^{\inverse 1}=x$, so $x^{\inverse 1}=\bot$ implies  
$x=\bot$, and thus $x=0\cdot x$.

\end{enumerate}
\end{proof}

\section{Models and Model Classes}
\label{sec:3}
In this section we define ``common cancellation meadows'' as common meadows that 
satisfy the so-called ``inverse cancellation law'', a law that is equivalent with 
the Common Inverse Law $\CIL$. 
Then, we provide a basis theorem for common cancellation meadows of 
characteristic zero. 

\subsection{Common Cancellation Meadows}
In~\cite[Thm.3.1]{BBP2013} we prove a generic basis theorem that implies that the 
axioms in Table~\ref{tab:Md} constitute a complete axiomatization of the equational 
theory of the involutive cancellation meadows (over signature $\Sigma_{\md}$). 
The cancellation law used in that result (that is, \ref{eq:CL} in 
Section~\ref{subsec:versus}) has various 
equivalent versions, and a particular one is $x\ne 0 \rightarrow x\cdot x^{-1}=1$, 
a version that is close to $\CIL$.

Below we define common cancellation meadows, using a cancellation law that is 
equivalent with $\CIL$, but first we establish a correspondence between models of 
$\Md_\bot+\NVL+\AVL$ and involutive cancellation meadows.

\begin{proposition}
\label{prop:7}~~
\begin{itemize}
\item[$1.$]
\label{7.1}
Every field can be extended with an additional value $\bota$ and subsequently it 
can be expanded with a constant $\bot$ and an inverse function in such a way that 
the equations of common meadows as well as $\NVL$
and $\AVL$ are satisfied, where the interpretation of $\bot$ is $\bota$.

\item[$2.$]
\label{7.2}
A model of $\Md_\bot+\NVL+\AVL$ extends a field with an additional value $\bota$ (the
interpretation of $\bot$) and expands it with the $\bot$-totalized inverse. 
\end{itemize}
\end{proposition}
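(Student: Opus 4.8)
The plan is to prove the two claims as two constructions that are mutually inverse. For the first claim, given a field $F$, pick a fresh element $\bota\notin|F|$, set $|F_\bot|=|F|\cup\{\bota\}$, interpret $0,1$ as in $F$, interpret $\bot$ as $\bota$, extend $+,\cdot,-$ by declaring that they return $\bota$ as soon as an argument equals $\bota$ and otherwise agree with $F$, and define $a^{-1}$ to be the field inverse for $a\in|F|\setminus\{0\}$ and $0^{-1}=\bota=\bota^{-1}$. One then verifies the axioms of Table~\ref{Mda} by a routine case split on whether each variable is assigned a field value or $\bota$; the only cases that are not immediate from field arithmetic are the inverse axioms \eqref{eq:11}--\eqref{eq:14} and axioms \eqref{eq:4}, \eqref{eq:10} with argument $0$ or $\bota$, where both sides evaluate to $\bota$ (for instance \eqref{eq:12} with $x=0$ reads $0\cdot\bota=\bota=0+0\cdot\bota$). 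Finally $\NVL$ holds because a field value $x$ has $0\cdot x=0$, and $\AVL$ holds because $x^{-1}=\bota$ only for $x\in\{0,\bota\}$, and for each of these $0\cdot x=x$.

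For the second claim, let $\mathcal M\models\Md_\bot+\NVL+\AVL$, write $\bota$ for $\bot^{\mathcal M}$, and set $F:=|\mathcal M|\setminus\{\bota\}$. First note that $0\neq\bota$ and $1\neq\bota$ in the only case of interest: if $0=\bota$ then \eqref{eq:12} and \eqref{eq:16} give $1=1+0\cdot 1^{-1}=1+\bot=\bot$, and if $1=\bota$ then $x=1\cdot x=\bot\cdot x=\bot$ for all $x$; either way $\mathcal M$ is the one-element meadow, which is a degenerate exception and can be set aside. So assume $0\neq\bota\neq 1$, hence $0,1\in F$. The key preliminary step is that $F$ is closed under $+,\cdot,-$: for $a,b\in F$ we have $0\cdot a=0=0\cdot b$ by $\NVL$, hence $0\cdot(a+b)=0\cdot a+0\cdot b=0$ and $0\cdot(a\cdot b)=(0\cdot a)\cdot b=0$ by \eqref{eq:8} and \eqref{eq:5}, so neither $a+b$ nor $a\cdot b$ can equal $\bota$ (as $0\cdot\bota=\bota\neq 0$ by \eqref{eq:17}); and $-a=\bota$ would give $a=-(-a)=-\bota=\bota$ by \eqref{eq:9} and \eqref{e9}, a contradiction.

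It remains to show $(F;0,1,+,\cdot,-)$ is a field and that the inverse of $\mathcal M$ is the $\bot$-totalized inverse of $F$. The commutative-ring axioms for $F$ are instances of axioms of $\Md_\bot$ once one observes, using $\NVL$, that $0\cdot x=0$ and hence $x+(-x)=0\cdot x=0$ hold on $F$. For invertibility, let $a\in F$ with $a\neq 0$; if $a^{-1}=\bota$ then $\AVL$ yields $0\cdot a=a$ while $\NVL$ yields $0\cdot a=0$, forcing $a=0$, a contradiction, so $a^{-1}\in F$; and by item~3 of Proposition~\ref{prop:4} we have $\Md_\bot+\NVL+\AVL\vdash\CIL$, so from $a\neq 0$ and $a\neq\bota$ we obtain $a\cdot a^{-1}=1$. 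Thus $a^{-1}$ is the multiplicative inverse of $a$ in $F$, and with $0\neq 1$ this makes $F$ a field. Finally, the inverse function of $\mathcal M$ is the field inverse on $F\setminus\{0\}$, sends $0$ to $\bot=\bota$ by \eqref{eq:15} and $\bota$ to $\bota$ by \eqref{e10}, while $+,\cdot,-$ restrict to the field operations on $F$ and send every tuple containing $\bota$ to $\bota$ by \eqref{eq:16}, \eqref{eq:17}, \eqref{e9}; so $\mathcal M$ is exactly $F$ extended with $\bota$ and expanded with the $\bot$-totalized inverse.

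The first claim is essentially bookkeeping; the real work is in the second, and there the main obstacle is the closure analysis for $F=|\mathcal M|\setminus\{\bota\}$ — in particular showing that the $\mathcal M$-inverse of a nonzero element of $F$ stays inside $F$. This is precisely the point where both $\NVL$ (to rule out $0\cdot a=a$) and $\AVL$ (to get the needed dichotomy on $a^{-1}$) are indispensable, together with the derived law $\CIL$ from Proposition~\ref{prop:4}; a secondary point of care is the trivial one-element meadow, which must be acknowledged as a boundary case.
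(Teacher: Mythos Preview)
Your argument is correct and follows the same line as the paper's proof: for part~2 you take $F=|\mathcal M|\setminus\{\bota\}$, which by $\NVL$ coincides with the paper's substructure $\{b:0\cdot b=0\}$, and the crucial step in both is that $a\in F\setminus\{0\}$ with $a^{-1}=\bota$ is ruled out by combining $\AVL$ (giving $0\cdot a=a$) with $\NVL$ (giving $0\cdot a=0$). Your write-up is considerably more detailed than the paper's---which dismisses part~1 as immediate and for part~2 only records the inverse-closure check---and you additionally flag the one-element meadow as a degenerate boundary case, which the paper silently passes over.
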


\begin{proof}
Statement 1 follows immediately. To prove 2,
consider the substructure of elements $b$ of the domain that satisfy $0 \cdot b = 0$. 
Only $\bota$ is outside this
subset. For $b$ with $0 \cdot b = 0$ we must check that $0 \cdot b^{\inverse 1} = 0$
unless $b=0$. 
To see this distinguish two cases: $b^{\inverse 1} = \bot$ 
(which implies $b=0$ with help of $\AVL$),
and $b^{\inverse 1} \neq \bot$ which implies $0 \cdot b^{\inverse 1} = 0$ by $\NVL$.
\end{proof}

As a consequence, we find the following result.
\begin{theorem}
\label{thm:one-one}
The models of $\Md_\bot+\NVL+\AVL$ that satisfy $0\ne 1$ are in one-to-one 
correspondence with the involutive cancellation meadows 
satisfying  $\Md$ (see Table~\ref{tab:Md}).
\end{theorem}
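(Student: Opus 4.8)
The plan is to establish the one-to-one correspondence by exhibiting mutually inverse constructions between the two classes, and this is essentially a packaging of Proposition~\ref{prop:7}. First I would set up the two directions. Given a model $\M$ of $\Md_\bot+\NVL+\AVL$ with $0\ne1$, Proposition~\ref{prop:7}(2) tells us that $\M$ splits as a field $F$ together with an adjoined point $\bota=\bot^\M$, and that the inverse operation of $\M$, when restricted to $F$, is the $\bot$-totalized inverse; since $0\ne1$ holds, $F$ is a genuine (nontrivial) field. To turn $F$ into an object over $\Sigma_{\md}$ I would compose with the $\bot$-to-$0$ collapse, i.e.\ reinterpret $0^{-1}$ as $0$ instead of $\bota$; call the result $\md(\M)$. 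I must then check that $\md(\M)$ is an \emph{involutive cancellation meadow}, i.e.\ that it satisfies $\Md$ and the cancellation law \eqref{eq:CL}. The meadow axioms of Table~\ref{tab:Md} follow because on the field part they are just the field axioms plus $0^{-1}=0$, and the cancellation law holds because $F$ is a field: $x\ne0$ in a field forces $x\cdot x^{-1}=1$ (this uses that on $F$ we have $x\cdot x^{-1}=1$ by $\CIL$, which holds in $\M$ by Proposition~\ref{prop:4}(3)), and multiplicative cancellation is then immediate.

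For the reverse direction, given an involutive cancellation meadow $N$ satisfying $\Md$ (equivalently, by the remark preceding the theorem, a field with zero-totalized inverse), I would apply the construction of Proposition~\ref{prop:7}(1): adjoin a fresh point $\bota$, interpret $\bot$ as $\bota$, redefine the inverse so that $0\mapsto\bota$ and $\bota\mapsto\bota$, and extend $+,\cdot,-$ so that every operation applied to any tuple involving $\bota$ returns $\bota$. Call this $\md^\bot(N)$. Statement~1 of Proposition~\ref{prop:7} asserts this yields a model of $\Md_\bot+\NVL+\AVL$, and $0\ne1$ persists because $N$ was a nontrivial field. So both maps are well defined between the two classes.

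The remaining step is to verify that $\md$ and $\md^\bot$ are mutually inverse up to isomorphism. Starting from $N$, forming $\md^\bot(N)$ and then deleting $\bota$ and recollapsing $0^{-1}$ to $0$ manifestly returns $N$ on the nose, since $\md^\bot$ changed nothing on the original carrier except the value of $0^{-1}$, which $\md$ undoes. Conversely, starting from a model $\M$ of $\Md_\bot+\NVL+\AVL$ with $0\ne1$, Proposition~\ref{prop:7}(2) identifies the carrier of $\M$ as $F\cup\{\bota\}$ with the operations on $F$ being exactly the field operations (the propagation behaviour on $\bota$ being forced by axioms~\eqref{eq:15}--\eqref{eq:17} together with $\NVL$, which pins down $0\cdot\bota=\bota$ and hence, via $x=x+0\cdot x$, the additive behaviour, and similarly for the multiplicative and inverse cases); so $\md^\bot(\md(\M))$ rebuilds the same field, re-adjoins a point that must be identified with $\bota$, and re-imposes exactly the propagation rules $\M$ already satisfied, giving an isomorphism $\M\cong\md^\bot(\md(\M))$ that is the identity on $F$ and sends the fresh point to $\bota$.

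The main obstacle is the uniqueness half of the correspondence: one must argue that a model $\M$ of $\Md_\bot+\NVL+\AVL$ is \emph{completely determined} by its field part, i.e.\ that the behaviour of all four operations on arguments involving $\bota$ is forced. The positive propagation axioms \eqref{eq:16}, \eqref{eq:17} handle $+$ and $\cdot$ directly; for $-$ one invokes \eqref{e9}; for inverse on $\bota$ one invokes \eqref{e10}; and the subtle point is that no element \emph{other} than $\bota$ can behave like an error element, which is exactly what $\NVL$ buys us (every $b\ne\bota$ satisfies $0\cdot b=0$, so the substructure on $F$ is closed and field-like, as in Proposition~\ref{prop:7}(2)). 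Once this rigidity is in hand, the bijection is bookkeeping. I would state the correspondence as: $N\mapsto\md^\bot(N)$ and $\M\mapsto\md(\M)$ are mutually inverse, and note that both respect isomorphisms, so the classes are in one-to-one correspondence.
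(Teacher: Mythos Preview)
Your proposal is correct and follows essentially the same route as the paper: both directions are obtained from Proposition~\ref{prop:7}, and the check that the field-part is an involutive cancellation meadow goes through $\CIL$ via Proposition~\ref{prop:4}(3), exactly as in the paper. You are in fact more thorough than the paper, which merely exhibits the two constructions and leaves the mutual-inverse verification implicit; your additional paragraph on rigidity (that the behaviour on $\bota$ is forced by \eqref{eq:16}, \eqref{eq:17}, \eqref{e9}, \eqref{e10} and $\NVL$) makes explicit what the paper takes for granted.
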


\begin{proof}
An involutive cancellation meadow can be expanded to a model of $\Md_\bot+\NVL+\AVL$ 
by extending its domain with a constant $\bota$ in such a way that the equations of 
common meadows as well as $\NVL$ and $\AVL$ are satisfied, where the interpretation 
of $\bot$ is $\bota$ (cf.~Proposition~\ref{prop:7}.1).

Conversely, given a model $\M$ of $\Md_\bot+\NVL+\AVL$, we  construct a cancellation 
meadow $\M'$ as follows:
$|\M'|=|\M|\setminus\{\bota\}$ with $\bota$ the interpretation of $\bot$, and 
$0^{-1}=0$ (by $0\ne 1$, $|\M'|$ is non-empty). 
We find by $\NVL$ that $0\cdot x=0$ and by 
$\CIL$ (thus by $\NVL+\AVL$, cf.~Proposition~\ref{prop:4}.3) that
$x\ne 0\rightarrow x\cdot x^{-1}=1$, which shows that $\M'$ is a cancellation meadow. 
\end{proof}

We define a \emph{common cancellation meadow} as a common meadow that satisfies 
the following \emph{inverse cancellation law} \eqref{CL}:
\begin{equation}
\label{CL}
\tag{$\iCAP$}
(x \neq 0 \wedge x \neq \bot \wedge x^{\inverse 1}\cdot y = x^{\inverse 1}\cdot z)
\rightarrow y=z.
\end{equation}
The class $\CCM$ of common cancellation meadows is axiomatized by $\Mda+\CIL$
in Table~\ref{Mda} and Table~\ref{tab:laws}, respectively. 
In combination with $\Mda$, the laws $\ICL$ and $\CIL$ are equivalent: 
first, $\Mda + \ICL \vdash \CIL$ because 
\[
(x \neq 0 \wedge x \neq \bot)\stackrel{\eqref{e10}}\rightarrow 
(x \neq 0 \wedge x \neq \bot\wedge x^{\inverse 1}\cdot x \cdot x^{\inverse 1}
= x^{\inverse 1} \cdot 1)\stackrel{\ICL}\rightarrow x\cdot x^{\inverse 1}=1.
\]
Conversely,
$\Mda+\CIL\vdash\ICL$:
\[
(x \neq 0 \wedge x \neq \bot \wedge x^{\inverse 1}\cdot y 
= x^{\inverse 1}\cdot z)\rightarrow x\cdot x^{\inverse 1}\cdot y=
x\cdot x^{\inverse 1} \cdot z\stackrel{\CIL}\rightarrow y=z.
\]

\subsection{A Basis Theorem for Common Cancellation Meadows of Characteristic Zero}

\begin{table}[t]
\centering
\hrule
\begin{align*}
\underline{n+1}\cdot (\underline{n+1})^{-1} &=1&&\text{($n\in\Nat$)}
\tag{$\CMCZ$}\\[2mm]
\underline{0}&=0&&\text{(axioms for }\\
\underline{1}&=1&&\text{~numerals, }\\
\underline{n+1}&=\underline{n} + 1&&\text{~$n\in\Nat$ and $n\geq 1$})
\end{align*}
\hrule
\caption{$\CMCZ$, the set of axioms for meadows of characteristic zero and numerals}
\label{CMCZ}
\end{table}

As in our paper~\cite{BBP2013a}, we use numerals $\underline{n}$ 
and the axiom scheme $\CMCZ$ defined in Table~\ref{CMCZ} to single out common
cancellation meadows of characteristic zero. In this section we prove that
$\Md_\bot+\CMCZ$ constitutes an axiomatization for common cancellation meadows of 
characteristic zero. In~\cite[Cor.2.7]{BBP2013a} we prove that $\Md+\CMCZ$ 
(for $\Md$ see Table~\ref{tab:Md}) constitutes 
an axiomatization for involutive cancellation meadows of characteristic zero.
We define $\CCM_0$ as the class of common cancellation meadows of characteristic 
zero.

\medskip

We further write 
$~\dfrac{t}{r}~$ (and sometimes
$t/r$ in plain text) ~for $~t\cdot r^{\inverse 1}$.

\begin{theorem}
\label{thm:basis}
$\Md_\bot+\CMCZ$ is a basis for the equational theory of $\CCM_0$.
\end{theorem}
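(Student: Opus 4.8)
The proof splits into soundness and completeness. Soundness is immediate: the axioms of $\Md_\bot$ hold in every common meadow, and $\CMCZ$ holds in every common cancellation meadow of characteristic zero, since each numeral $\underline{n+1}$ with $n\in\Nat$ denotes an element that is neither $0$ nor $\bot$, so that $\CIL$ yields $\underline{n+1}\cdot(\underline{n+1})^{\inverse 1}=1$. Thus $\CCM_0\models\Md_\bot+\CMCZ$, and what remains is completeness: every equation $s=t$ over $\Sigma_{\md,\bot}(X)$ that holds in all of $\CCM_0$ must be derivable from $\Md_\bot+\CMCZ$. The plan is to put $s$ and $t$ into a normal form, read off from the models precisely which polynomial facts the validity of $s=t$ encodes, import the field-theoretic part from the involutive case, and re-assemble everything into an equational derivation, carefully tracking the ``error'' subterms $0\cdot(\dots)$.

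First, by Proposition~\ref{prop:3} I may assume $s=r_1\cdot r_2^{\inverse 1}$ and $t=r_3\cdot r_4^{\inverse 1}$ with $r_1,\dots,r_4$ over $\Sigma_f(X)$, $\VAR(s)=\VAR(r_1)\cup\VAR(r_2)$ and $\VAR(t)=\VAR(r_3)\cup\VAR(r_4)$, so it suffices to derive $r_1\cdot r_2^{\inverse 1}=r_3\cdot r_4^{\inverse 1}$. If $r_2$ denotes the zero polynomial, then $\Md_\bot$ proves $r_2=0\cdot\sum_{x\in\VAR(r_2)}x$ (Proposition~\ref{prop:3a}), whence $\Md_\bot\vdash r_2^{\inverse 1}=\bot$ and so $\Md_\bot\vdash s=\bot$; since $\Rat_\bot\in\CCM_0$, the term $t$ must then denote $\bot$ under every assignment as well, so $r_4$ likewise denotes the zero polynomial and $\Md_\bot\vdash t=\bot$, finishing this case. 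So assume that $r_2$ and $r_4$ denote nonzero polynomials.

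Next I would use the description of the models. By Proposition~\ref{prop:4}(4,5), $\Md_\bot+\CIL\vdash\NVL\wedge\AVL$, so by Proposition~\ref{prop:7}(2) every model of $\CCM_0$ is obtained from a field by adjoining the additional value $\bota$ and $\bot$-totalizing the inverse; with $\CMCZ$ that field has characteristic zero (the degenerate $0=1$ models being ruled out by $\CMCZ$). Hence $\CCM_0\models s=t$ means: for every characteristic-zero field $F$ and every assignment into $|F|\cup\{\bota\}$, the two terms take the same value. Because $\bota$ propagates through all operations, and $r_2,r_4$ are nonzero polynomials, this forces $\VAR(s)=\VAR(t)=:V$ (a variable occurring in exactly one of $s,t$ could be sent to $\bota$ while the others receive $F$-values making the other side a genuine field element, contradicting $s=t$). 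Under an assignment $\alpha\colon V\to|F|$ the value of $r_1\cdot r_2^{\inverse 1}$ is $\bota$ exactly when $r_2$ vanishes at $\alpha$, and likewise for $r_3\cdot r_4^{\inverse 1}$ and $r_4$; so $r_2$ and $r_4$ have the same zero set over every characteristic-zero field. Evaluating over $\overline{\Rat}$ and applying Hilbert's Nullstellensatz, this gives $\sqrt{(r_2)}=\sqrt{(r_4)}$ over $\overline{\Rat}[X]$, hence also over $\Rat[X]$, and therefore (by Gauss's lemma) $r_2\mid A\,r_4^{N}$ and $r_4\mid B\,r_2^{M}$ in $\Int[X]$ for some nonzero $A,B\in\Int$ and $N,M\geq 1$. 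Finally, on assignments where $r_2$ (equivalently $r_4$) does not vanish one has $r_1 r_4=r_2 r_3$ in $F$, and this persists where they do vanish; since $\Rat$ is infinite, $r_1 r_4$ and $r_2 r_3$ denote the same element of $\Int[X]$, so $r_1 r_4=r_2 r_3$ is provable from the commutative-ring fragment of $\Md$, and decorating that proof via Proposition~\ref{prop:3a} makes the $V$-padded identity $r_1 r_4+0\cdot\sum_{v\in V}v=r_2 r_3+0\cdot\sum_{v\in V}v$ a theorem of $\Md_\bot$; the companion fact for involutive cancellation meadows of characteristic zero is \cite[Cor.2.7]{BBP2013a}.

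It then remains to assemble these ingredients into an $\Md_\bot+\CMCZ$-derivation of $r_1\cdot r_2^{\inverse 1}=r_3\cdot r_4^{\inverse 1}$. Using identity~\eqref{eq:cv} ($x\cdot y\cdot y^{\inverse 1}=x+0\cdot y^{\inverse 1}$) and axiom~\eqref{eq:13} one multiplies the two normal forms by appropriate denominators and their powers, rewrites the numerators by means of the $V$-padded identity above, and passes between $r_2^{\inverse 1}$- and $r_4^{\inverse 1}$-expressions using the divisibilities $r_2\mid A\,r_4^{N}$ and $r_4\mid B\,r_2^{M}$; here $\CMCZ$ is exactly what makes the numerals $\underline A,\underline B$ invertible and, via~\eqref{ce1}, what annihilates every term of the form $0\cdot\underline n^{\inverse 1}$. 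Throughout one must check that the residual subterms $0\cdot(\dots)$ produced on the two sides coincide; since $\VAR(s)=\VAR(t)=V$ and every subterm occurring has its variables in $V$, the conditional equations of Proposition~\ref{prop:2} together with Propositions~\ref{prop:3a} and~\ref{prop:4} suffice to bring them to a common form. The main obstacle will be precisely this last bookkeeping: the two sides of $s=t$ must agree not merely on field values but on the additional value, so one must convert the purely set-theoretic equality of the zero loci of $r_2$ and $r_4$ into an explicit equational manipulation; the field-theoretic core of the argument, by contrast, is the same as in the setting of involutive cancellation meadows of characteristic zero and can be imported from \cite[Cor.2.7]{BBP2013a}.
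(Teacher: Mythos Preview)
Your overall architecture matches the paper's: reduce to fractions $t_1/t_2$ and $r_1/r_2$ via Proposition~\ref{prop:3}, extract three semantic consequences from $\CCM_0\models t_1/t_2=r_1/r_2$ (same variables, same zero locus of the denominators, and a cross-multiplication identity), lift each to a derivation in $\Md_\bot+\CMCZ$, and reassemble. The paper organizes these as equations \eqref{p1}--\eqref{p3} and then gives an explicit derivation chain, whereas you leave the assembly as a plan; this is a presentational rather than mathematical difference.

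The one place where your route genuinely diverges is the treatment of \eqref{p1}, the equational rendering of ``$t_2$ and $r_2$ have the same zeros.'' You go via the Nullstellensatz to $\sqrt{(r_2)}=\sqrt{(r_4)}$ and then divisibilities $r_2\mid A\,r_4^{N}$, $r_4\mid B\,r_2^{M}$, intending to use these to pass between $r_2^{\inverse 1}$- and $r_4^{\inverse 1}$-expressions. That step is the delicate one: from $A\,r_4^{N}=r_2\cdot q$ you get, after simplification, $0\cdot r_4^{\inverse 1}=0\cdot r_2^{\inverse 1}+0\cdot q^{\inverse 1}$, and you still have to absorb the extra $0\cdot q^{\inverse 1}$; iterating the argument on $q$ works but is awkward. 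The paper instead normalizes $t_2,r_2$ to have content~$1$ and invokes unique factorization into primitive polynomials (\cite[Cor.~2.4, Ch.~IV]{Lang}): same zero locus then means the \emph{same set} of irreducible factors, possibly with different multiplicities, and a single application of axiom~\eqref{eq:10} ($0\cdot(x\cdot x)=0\cdot x$) collapses the multiplicities to yield $0\cdot t_2^{\inverse 1}=0\cdot r_2^{\inverse 1}$ directly. Your Nullstellensatz statement is of course equivalent to this, but the factorization viewpoint is what makes the equational step short.

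For the cross-multiplication part you take $r_1 r_4=r_2 r_3$ as a polynomial identity, which is correct once $r_2,r_4$ are nonzero. The paper instead formulates the slightly weaker $t_2 r_2(t_1 r_2-r_1 t_2)+(\text{padding})=0+(\text{padding})$ and imports its derivability wholesale from the involutive completeness result \cite[Cor.~2.7]{BBP2013a}; your version would require an extra word on why a $\Md$-derivation of a $\Sigma_f$-identity can be replayed in $\Md_\bot$ with the $0\cdot\sum_{v\in V}v$ padding, which is exactly what the paper's formulation of \eqref{p3} sidesteps.
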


\begin{proof}
Soundness holds by definition of $\CCM_0$.
 
In order to prove completeness, we consider two cases.
\\[2mm]
\underline{Case 1}.
Assume $\CCM_0\models t=\bot$. By Proposition~\ref{prop:3}
we can bring $t$ in the form $t_1/t_2$ with $t_1,t_2$ polynomials over $\Sigma_f(X)$, 
thus
\begin{equation}
\label{v00}
\CCM_0\models t=\frac{t_1}{t_2}.
\end{equation}
Then in each model $\M\in\CCM_0$, $t_1=\bot$ or $t_2=\bot$ or $t_2=0$. By definition of $\CCM_0$,
the first two cases are excluded because in $\M$
each variable of both $t_1$ and $t_2$ can be interpreted as a non-$\hat{\bot}$ value (e.g. 0), 
and then $t_1$ ($t_2$) evaluates to a value different from $\hat{\bot}$ in $\M$.
Thus~\eqref{v00} implies that $t_2=0$ holds in each field. 
Apparently it holds in each field that 
\[\textstyle
t_2=0\cdot \sum_{x\in\VAR(t_2)}x
\]
and it must be the case that in the polynomial $t_2$, all coefficients are 0. 
Furthermore, if for a closed term $s$ over $\Sigma_f$  
it holds that $s=0$ in each field,
then there exists an equational proof of $s=0$ in which substitutions happen first and 
only closed instances of the axioms for commutative rings are 
used.\footnote{%
  Without loss of generality it can be assumed that in equational proofs,
  substitutions happen first,
  see, e.g.~\cite{Groote}.} 
This implies that $\Md_\bot\vdash s=0$ because by Proposition~\ref{prop:3a}, $\Md_\bot\vdash 0\cdot s=0$ 
and thus each application of $\Md_\bot$-axiom~\eqref{Ma4}
(that is $x+(-x)=0\cdot x$), reduces to $u+(-u)=0$ with $u$ a closed term over $\Sigma_f$. 

It follows that $\Md_\bot\vdash t_2=0$, so by axiom~\eqref{Ma13} and equation~\eqref{e14}, 
$\Md_\bot\vdash t=t_1/t_2=\bot$.
\\[2mm]
\underline{Case 2}.
Assume $\CCM_0\models t=r$ and $\CCM_0\not\models t=\bot$ (and thus $\CCM_0\not\models r=\bot$). 
By Proposition~\ref{prop:3} we can bring $t$ in the form $t_1/t_2$
and $r$ in the form $r_1/r_2$ with $t_i,r_i$ polynomials over 
$\Sigma_f(X)$, thus
\begin{equation}
\label{v0}
\CCM_0\models \frac{t_1}{t_2}=\frac{r_1}{r_2}.
\end{equation}
Using axiom~\eqref{Ma12} it can be guaranteed that in $t_2$ and 
\label{pagekey}
$r_2$ no summands with coefficient $0$ occur.\footnote{%
\label{vn}
  This was the reason to revise the preceding version of this 
  paper (v3), in which $\Mda$ is defined differently: axiom~\eqref{Ma12} 
  was replaced by $1^{-1}=1$
  and equations~\eqref{e2} and~\eqref{e13} (Prop.\ref{prop:1}) were 
  also axioms; this resulted in a weaker system from which 
  axiom~\eqref{Ma12} cannot be derived, whereas the 
  absence of summands with coefficient $0$ in $t_2$ and 
  $r_2$ can be proven with the current modification of $\Mda$. 
  For example, using $x + 0\cdot y = x\cdot (1 + 0\cdot y)$ 
  (which follows easily),
  $(3x+2) \cdot  (6x^5 + 0\cdot x y + 2yz^3 + 4)^{-1} =
  (3x+2)\cdot ((6x^5 + 2yz^3 + 4)\cdot (1 + 0\cdot x y))^{-1}= 
  ((3x+2)\cdot (1 + 0\cdot x y))\cdot (6 x^5 + 2yz^3 + 4)^{-1}$.
}
We will first argue that \eqref{v0} implies that the following three equations 
are valid in $\CCM_0$:
\begin{align}
\label{p1}
0\cdot t_2^{\inverse 1}&= 0\cdot r_2^{\inverse 1},\\
\label{p2}
0\cdot t_1+0\cdot t_2&=0\cdot r_1+0\cdot r_2,\\
\label{p3}
t_2\cdot r_2\cdot(t_1\cdot r_2+ (-r_1)\cdot t_2)+0\cdot t_2^{\inverse 1}+0\cdot r_2^{\inverse 1}
&=0\cdot t_1+0\cdot t_2^{\inverse 1}+0\cdot r_1+0\cdot r_2^{\inverse 1}.
\end{align}
\underline{Ad~\eqref{p1}}. 
Assume this is not the case, then there exists a common cancellation meadow 
$\M\in\CCM_0$ and an interpretation of the variables in $t_2$ and $r_2$ such that 
one of $\smash{t_2^{\inverse 1}}$ and $\smash{r_2^{\inverse 1}}$ is interpreted as 
$\bota$ (the interpretation of $\bot$), and the other is not. 
This contradicts~\eqref{v0}.
\\[2mm]
\underline{Ad~\eqref{p2}}. 
This equation characterizes that $t_1/t_2$ and $r_1/r_2$ contain the same
variables, and is related to Proposition~\ref{prop:3a}.
Assume this is not the case, say $t_1$ and/or $t_2$ contains a variable $x$ that 
does not occur in $r_1$ and $r_2$. Since $\CCM_0\not\models r_1/r_2=\bot$, there is 
an instance of $r_i$'s variables, say $\overline{r_i}$ such that 
$\CCM_0\models \overline{r_1}/\overline{r_2}\ne\bot$.
But then $x$ can be instantiated with $\bot$, which contradicts~\eqref{v0}.
\\[2mm]
\underline{Ad~\eqref{p3}}. It follows from~\eqref{v0} that in~\eqref{p3}
both the lefthand-side and the right\-hand-side equal zero in all involutive 
cancellation meadows.
By Theorem~\ref{thm:one-one} we find $\CCM\models \eqref{p3}$, and hence 
$\CCM_0\models \eqref{p3}$.

\medskip

We now argue that~$\eqref{p1} - \eqref{p3}$
are derivable from $\Mda + \CMCZ$, and that from those~\eqref{v0} is derivable 
from $\Mda + \CMCZ$.
\\[2mm]
\underline{Ad~\eqref{p1}}. The statement 
$\CCM_0\models 0\cdot t_2^{\inverse 1}= 0\cdot r_2^{\inverse 1}$
implies that $t_2$ and $r_2$ have the same
zeros in the algebraic closure $\smash{\overline{\Rat}}$ of $\Rat$ 
(if this were not the case, then
$\overline{\Rat}_\bot\not\models0\cdot t_2^{\inverse 1}= 0\cdot r_2^{\inverse 1}$,
but $\overline{\Rat}_\bot\in \CCM_0$). 
By the Nullstellensatz (see, e.g.~\cite[Thm.1.5 (Ch.IX)]{Lang}), there exists 
$m\geq 1$ such that $(t_2)^m\in\ga$, the ideal generated by $r_2$, so $(t_2)^m$ is of the form
$r_2\cdot s$ for some polynomial $s$. Thus each factor of $r_2$ is a factor of $r_2\cdot s$, and
hence a factor of $(t_2)^m$. For the same reason, each factor of $r_2$ is one of $t_2$.
We may assume that the gcd of $t_2$'s coefficients is 1, 
and similar for $r_2$:
if not, then $t_2=k\cdot t'$ with $t'$ a polynomial with that property, and since
$k$ is a fixed numeral, we find $0\cdot k=0$ (also in fields with a characteristic 
that is a factor of $k$), and hence $0\cdot t_2=0\cdot t'$. 
By unique factorisation~(\cite[Cor.2.4 (Ch.IV)]{Lang}),  we find that $t_2$ and $r_2$
have equal primitive polynomials. 
Application of $\CMCZ$ 
(for the case $t_2=k\cdot t'$) and
equation~\eqref{e2} (that is, $0\cdot (x\cdot x)=0\cdot x$) then yields 
\begin{equation}
\label{v3}
\Mda+ \CMCZ\vdash 0\cdot t_2^{\inverse 1}= 0\cdot r_2^{\inverse 1}.
\end{equation}

\noindent
\underline{Ad~\eqref{p2}}. 
From Proposition~\ref{prop:3a} and validity of~\eqref{p2}
it follows that 
\begin{equation}\textstyle
\label{v4}
\Mda\vdash 0\cdot t_1+0\cdot t_2=0\cdot \sum_{x\in\VAR(t_1/t_2)}x
=0\cdot \sum_{x\in\VAR(r_1/r_2)}x=0\cdot r_1+0\cdot r_2.
\end{equation}

\noindent
\underline{Ad~\eqref{p3}}. We first derive 
\begin{align*}
\Mda\vdash 0\cdot t_1+0\cdot t_2^{\inverse 1}
&=0\cdot t_1+0\cdot(1+0\cdot t_2^{\inverse 1})\\
&=0\cdot t_1+0\cdot t_2\cdot t_2^{\inverse 1}
&&\text{by axiom~\eqref{Ma10}}\\
&=0\cdot t_1+0\cdot t_2+0\cdot t_2^{\inverse 1},
\end{align*}
and in a similar way one derives $\Mda\vdash 0\cdot r_1+0\cdot r_2^{\inverse 1}=
0\cdot r_1+0\cdot r_2+0\cdot r_2^{\inverse 1}$. 
Hence, we find with~\eqref{v3} and \eqref{v4}
that 
\begin{align}
\label{v5}
\Mda+ \CMCZ\vdash 
0\cdot t_1+0\cdot t_2^{\inverse 1}&=(0\cdot t_1+0\cdot t_2^{\inverse 1})+
(0\cdot r_1+0\cdot r_2^{\inverse 1})\\
\label{v6}
&=0\cdot r_1+0\cdot r_2^{\inverse 1}.
\end{align}

From $\CCM_0\models\eqref{p3}$ it follows from the completeness result on the 
class of involutive meadows of characteristic zero (see~\cite[Cor.2.7]{BBP2013a})
that $\Md+\CMCZ\vdash \eqref{p3}$, and also that 
$\Md + \CMCZ\vdash t_2 \cdot r_2\cdot(t_1 \cdot r_2 + (-r_1)\cdot t_2) =0$.
Because all coefficients in this identity are integer expressions, 
$\CMCZ$ plays no role in these proofs, so that  
$\Md \vdash t_2 \cdot r_2\cdot(t_1 \cdot r_2 + (-r_1)\cdot t_2) =0$. 
Moreover, $0\cdot s = 0$ can be applied to each coefficient $s$, so that 
$\Mda \vdash t_2 \cdot r_2\cdot(t_1 \cdot r_2 + (-r_1)\cdot t_2) =0\cdot(t_1\cdot t_2\cdot r_1\cdot r_2)$, 
from which one easily finds $\Mda\vdash \eqref{p3}$.

\medskip

Finally, we show the derivability of $t_1/t_2=r_1/r_2$ in $\Mda+\CMCZ$.
Multiplying both sides of~\eqref{p3} with 
$(t_2\cdot r_2)^{\inverse 1}\cdot (t_2\cdot r_2)^{\inverse 1}$ implies by~\eqref{e10}, 
$0\cdot x+0\cdot x=0\cdot x$, and equation~\eqref{e2} that 
\begin{align}
\nonumber
&\Mda\vdash
(t_2\cdot r_2)^{\inverse 1}\cdot(t_1\cdot r_2+ (-r_1)\cdot t_2)
+0\cdot t_2^{\inverse 1}+0\cdot r_2^{\inverse 1}
=
0\cdot t_1+0\cdot t_2^{\inverse 1}+0\cdot r_1+0\cdot r_2^{\inverse 1},
\intertext{which implies by Proposition~\ref{prop:2a} that}
\nonumber
&\Mda\vdash\frac{t_1}{t_2}+\frac{-r_1}{r_2}+0\cdot t_2^{\inverse 1}+0\cdot r_2^{\inverse 1}
=0\cdot t_1+0\cdot t_2^{\inverse 1}
+0\cdot r_1+0\cdot r_2^{\inverse 1},
\intertext{and thus }
\label{e:A}
&\Mda\vdash\frac{t_1}{t_2}+\frac{-r_1}{r_2}+0\cdot t_1+0\cdot t_2^{\inverse 1}
+0\cdot r_1+0\cdot r_2^{\inverse 1}=
0\cdot t_1+0\cdot t_2^{\inverse 1}
+0\cdot r_1+0\cdot r_2^{\inverse 1}.
\end{align}
Hence,
\begin{align*}
&\Mda+ \CMCZ\vdash\frac{t_1}{t_2}
=\frac{t_1}{t_2}+0\cdot t_1+0\cdot t_2^{\inverse 1}
\\
&~\hspace{21.6mm}
=\frac{t_1}{t_2}+0\cdot t_1 +0\cdot t_2^{\inverse 1}+0\cdot r_1+0\cdot r_2^{\inverse 1}
&&\text{by \eqref{v5}\hspace{4mm}}
\\
&~\hspace{21.6mm}
=\frac{t_1}{t_2}+(\frac{r_1}{r_2}+\frac{-r_1}{r_2})
+0\cdot t_1 +0\cdot t_2^{\inverse 1}+0\cdot r_1+0\cdot r_2^{\inverse 1}
\hspace{15.2mm}
\\
&~\hspace{21.6mm}
=(\frac{t_1}{t_2}+\frac{-r_1}{r_2})+\frac{r_1}{r_2}
+0\cdot t_1 +0\cdot t_2^{\inverse 1}+0\cdot r_1+0\cdot r_2^{\inverse 1}
\\
&~\hspace{21.6mm}
=\frac{r_1}{r_2}
+0\cdot t_1 +0\cdot t_2^{\inverse 1}+0\cdot r_1+0\cdot r_2^{\inverse 1}
&&\text{by \eqref{e:A}}\\
&~\hspace{21.6mm}
=\frac{r_1}{r_2}+0\cdot r_1+0\cdot r_2^{\inverse 1}
&&\text{by \eqref{v6}}\\
&~\hspace{21.6mm}
=\frac{r_1}{r_2}.
\end{align*}
\end{proof}

\section{Concluding Remarks}
\label{sec:4}

\paragraph{\bf Open Question.}
It is an open question whether there exists a basis result for the equational 
theory of $\CCM$.
We notice that in~\cite{BM14} a basis result for one-totalized non-involutive 
cancellation meadows is provided,
where the multiplicative inverse of 0 is 1 and cancellation is defined as usual 
(that is, by the cancellation law~\ref{eq:CL} in Section~\ref{subsec:versus}).

\paragraph{\bf Common Intuitions and Related Work.}
Common meadows are motivated as being the most intuitive modelling of a 
totalized inverse function to the best of our knowledge.
As stated in Section~\ref{sec:Intro} (Introduction), the use of the constant 
$\bot$ is a matter of convenience only because it merely 
constitutes a derived constant with defining equation $\bot = 0^{-1}$, which 
implies that all uses of $\bot$ can be removed.\footnote{%
  We notice that $0 = 1 + (-1)$, from which it follows that $0$ can also be 
  considered a derived constant over a reduced signature. Nevertheless, the 
  removal of $0$ from the signature of fields is usually not considered helpful.}
We notice that considering $\bot = 0^{-1}$ as an error-value supports the intuition 
for the equations of $\Mda$.

As a variant of involutive and common meadows, \emph{partial meadows} are 
defined in~\cite{BM2011a}. The specification method used in this paper
is based on meadows and therefore it is more simple, but
less general than the construction of Broy and Wirsing~\cite{BW81} for the specification of
partial datatypes. 

The construction of common meadows is related to the construction of \emph{wheels}
by Carlstr\"om~\cite{Carl}. 
However, we have not yet found a structural 
connection between both constructions which differ in quite important details. 
For instance, wheels are involutive whereas common meadows are non-involutive.

\paragraph{\bf Quasi-Cancellation Meadows of Characteristic Zero.}
Following Theorem~\ref{thm:basis},
a common meadow of characteristic zero can alternatively be defined as a structure 
that satisfies all equations true of all common cancellation meadows of 
characteristic zero. 
We write $\CM_0$ for the class of all common meadows of characteristic zero. 

With this alternative definition in mind, we define a 
\emph{common quasi-can\-cellation meadow of characteristic zero}
as a structure that satisfies all conditional equations which are true of all 
common cancellation meadows of characteristic zero. 
We write $\CQCM_0$ for the class of all common quasi-cancellation meadows of 
characteristic zero.
 
It is easy to show that $\CQCM_0$ is strictly larger than $\CCM_0$. 
To see this one extends the signature of common meadows 
with a new constant $c$. Let $L_{ccm,0}$ be the set of conditional equations 
true of all structures in $\CCM_0$. 
We consider the initial algebra of $L_{ccm,0}$ in the signature extended with $c$. 
Now neither $L_{ccm,0} \vdash c= \bot$ can hold 
(because $c$ might be interpreted as say 1), nor  $L_{ccm,0} \vdash 0 \cdot c = 0$ 
can hold (otherwise $L_{ccm,0} \vdash 0 = 0\cdot \bot = \bot$ would hold).
For that reason in the initial algebra of $L_{ccm,0}$ in the extended signature 
interprets $c$ as an entity $e$ in such a way  that neither
$c = \bot$  nor $0 \cdot c = 0$ is satisfied. For that reason
$c$ will be interpreted by a new entity that refutes $\CIL$.

$\CM_0$ is strictly larger than $\CQCM_0$. 
To see this let $E_{ccm,0}$ denote the set of equations valid in all common 
cancellation meadows of characteristic zero.  
Again we add an extra constant $b$ to the signature of common meadows. 
Consider the initial algebra $I$ of
$E_{ccm,0}+ (b^{-1} =\bot)$ in the extended signature. 
In $I$ the interpretation of $b$ is a new object because it cannot be 
proven equal to 0 and not to $\bot$ and not to any other closed term over the 
signature of common meadows. 
Now we transform $E_{ccm,0} + (b^{-1} =\bot)$ into its set of closed consequences 
$E_{ccm,0}^{cl,b}$ over the extended signature. 
We claim that $b = 0 \cdot b$ cannot be proven from $E_{ccm,0} + (b^{-1} =\bot$). 
If that were the case at some stage in the derivation an $\bot$ must appear from 
which it follows that $b=\bot$ is provable as well, because $\bot$ is propagated by 
all operations. 
But that cannot be the case as we have already concluded that $b$ differs from 
$\bot$ in the initial algebra $I_0$ of $E_{ccm,0}^{cl,b}$. 
Thus, $b\ne\bot\rightarrow  0 \cdot b=0$ (an instance of $\NVL$) is not valid in 
$I_0$.

However, at this stage we do not know the answers to the following two questions:
\begin{itemize}
\setlength{\itemsep}{0mm}
\item Is there a finite equational basis for the class $\CM_0$ of common meadows 
of characteristic zero?
\item Is there a finite conditional equational basis for the class $\CQCM_0$ of 
common quasi-cancellation meadows of characteristic zero?
\end{itemize}

\paragraph{\bf The Initial Common Meadow.}
In~\cite{BP14} we introduce \emph{fracpairs} with a definition that
is very close to that of the field of fractions of an integral 
domain. Fracpairs are defined over a commutative ring $R$ 
that is \emph{reduced}, i.e., $R$ has no nonzero nilpotent 
elements. A fracpair over $R$ is an expression $\dfrac pq$ with $p,q\in R$
(so $q=0$ is allowed) modulo the equivalence generated by
\begin{align*}
	 \frac{x \cdot z }{y \cdot (z \cdot z) } &= \frac{x}{y \cdot z}.
\end{align*}
This rather simple equivalence appears to be a congruence with respect to the 
common meadow signature $\Sigma_{\md,\bot}$
when adopting natural definitions:
\begin{align*}
&0=\frac 01, \quad 1=\frac 11, \quad \bot=\frac 10, 
\quad
\lh\frac pq\rh+\lh\frac rs\rh=
\frac{p\cdot s+r\cdot q}{q\cdot s}, 
\\
&\lh\frac pq\rh\cdot\lh\frac rs\rh=\frac{p\cdot r}{q\cdot s},
\quad
-\lh\frac pq\rh=\frac{-p}q, 
\quad\text{and}\quad
\lh\frac pq\rh^{-1}=\frac{q\cdot q}{p\cdot q}.
\end{align*}

In~\cite{BP14} we prove that the initial common meadow is isomorphic to
the initial algebra of fracpairs over the integers $\Int$.\footnote{%
  It should be mentioned that $\Mda$ as presented in~\cite{BP14}
  differs from the current version, as 
  explained in Footnote~\ref{vn} (page~\pageref{pagekey}).
  However, this makes no difference for the isomorphism 
  result mentioned: with both versions of $\Mda$ it easily follows that
  for every closed term $p$ over $\Sigma_f$, $0.p=0$, which is the 
  essential property.}
Moreover, we prove that the initial algebra of fracpairs over $\Int$ constitutes 
a homomorphic pre-image of the common meadow $\Rat_\bot$, 
and we define ``rational fracpairs'' over $\Int$ that constitute
an initial algebra that is isomorphic to $\Rat_\bot$. Finally, we consider 
some term rewriting issues for meadows.

These results reinforce our idea that common meadows can be used 
in the development of alternative foundations of elementary (educational)
mathematics from a perspective of abstract datatypes, term rewriting and 
mathematical logic.

\paragraph{Acknowledgement.}
We thank Bas Edixhoven (Leiden University) for helpful comments concerning the 
proof of Theorem~\ref{thm:basis},
including his suggestion to use~\cite{Lang} as a reference for this proof.
Furthermore, we thank two anonymous reviewers for valuable comments.
Finally, we thank John Tucker (Swansea) for valuable discussions 
that gave rise to re(formulating) axiom~\eqref{Ma12} of Table~\ref{Mda}
in this version (v4).

\addcontentsline{toc}{section}{References}


\begin{thebibliography}{58}

\bibitem{BBP2013}
Bergstra, J.A., Bethke, I., and Ponse, A. (2013).
Cancellation meadows: a generic basis theorem and some applications.
\newblock {\em The Computer Journal}, 56(1):3-14.
\\DOI: \url{https://doi.org/10.1093/comjnl/bxs028}.

\bibitem{BBP2013a}
Bergstra, J.A., Bethke, I., and Ponse, A. (2015).
Equations for formally real meadows.
\newblock \emph{Journal of Applied Logic}, 13(2):1-23.
\\
DOI: \url{https://doi.org/10.1016/j.jal.2015.01.004}.
\\
Also available at
\texttt{arXiv:1310.5011v4} [math.RA, cs.LO], 13 January 2015.

\bibitem{BBTW}
Bergstra, J.A., Broy, M., Tucker, J.V., and Wirsing, M. (1981).
On the power of algebraic specifications.
\newblock In J.~Gruska and M.~Chytil (eds.), 
\emph{Proc. MFCS'81}, \v{S}trbsk\'e Pleso, Czechoslovakia.
LNCS~118, pp.\ 193-204. Springer.

\bibitem{BM2011a}
Bergstra, J.A. and Middelburg, C.A. (2011).
Inversive meadows and divisive meadows.
\newblock \emph{Journal of Applied Logic}, 9(3):203-220.
\\
DOI: \url{https://doi.org/10.1016/j.jal.2011.03.001}. 

\bibitem{BM14}
Bergstra, J.A. and Middelburg, C.A. (2015).
Division by zero in non-involutive meadows.
\newblock \emph{Journal of Applied Logic}, 13(1):1-12.
\\
DOI: \url{https://doi.org/10.1016/j.jal.2014.10.001}. 
\\
Also available at \url{arXiv:1406.2092v1} [math.RA], 9 June 2014.

\bibitem{BP2013}
Bergstra, J.A. and Ponse, A. (2017). 
Probability Functions in the context of signed involutive meadows.
\newblock In P. James and M. Roggenbach (eds.),
\emph{WADT~2016}, LNCS~10644, pp.~73-87, Springer.
DOI: \url{https://doi.org/10.1007/978-3-319-72044-9_6}.
An extended version is available at
\texttt{arXiv:1307.5173v4} [math.LO], 22 December 2016.

\bibitem{BP14}
Bergstra, J.A. and Ponse, A. (2016). 
Fracpairs and fractions over a reduced commutative ring.
\newblock \emph{Indagationes Mathematicae}, 27(3):727-748.
\\
DOI: \url{https://doi.org/10.1016/j.indag.2016.01.007}.
\\
Also available at \texttt{arXiv:1411.4410v2} [math.RA], 22 January 2016.

\bibitem{BT07}
Bergstra, J.A. and Tucker, J.V. (2007).
The rational numbers as an abstract data type.
\newblock {\em Journal of the ACM},  54(2), Article 7 (25 pages).

\bibitem{BR10}
Bethke, I. and Rodenburg, P.H. (2010).
The initial meadows.
\newblock {\em Journal of Symbolic Logic}, 75(3):888-895.
DOI: \url{https://doi.org/10.2178/jsl/1278682205}.

\bibitem{BW81}
Broy, M. and Wirsing, M. (1981).
On the algebraic specification of nondeterministic programming languages.
\newblock In E.~Astesiano and C.~B\"ohm (eds.), 
\emph{Proc. CAAP 1981}, Genova, Italy. LNCS~112, pp.\ 162-179. Springer.

\bibitem{Carl}
Carlstr\"om, J. (2004).
Wheels - on division by zero.
\newblock \emph{Mathematical Structures in Computer Science}, 14(1):143-184.
DOI: \url{https://doi.org/10.1017/S0960129503004110}.

\bibitem{Groote}
Groote, J.F (1990). 
A new strategy for proving omega-completeness applied to process algebra. 
In J.C.M. Baeten and J.W. Klop (eds.), 
\emph{Theories of Concurrency:  Unification and Extension}
(CONCUR 1990, Amsterdam), LNCS 458, pp.\ 314-331. Springer.

\bibitem{K75}
Komori, Y. (1975).
Free algebras over all fields and pseudo-fields.
\newblock Report 10, pp.~9-15, Faculty of Science, Shizuoka University, Japan.

\bibitem{Lang}
Lang, S. (2002).
\emph{Algebra}.
\newblock Graduate Texts in Mathematics, Vol. 211, third revised edition. Springer.

\bibitem{McCune}
McCune, W. (2008).
The GUI: Prover9 and Mace4 with a Graphical User Interface.
Prover9-Mace4-v05B.zip (March 14, 2008).
\\
Available at
\url{https://www.cs.unm.edu/~mccune/prover9/gui/v05.html}.

\bibitem{Ono83}
Ono, H. (1983).
Equational theories and universal theories of fields.
\newblock {\em Journal of the Mathematical Society of Japan}, 35(2):289-306.

\bibitem{Wir90}
Wirsing, M. (1990).
Algebraic specifications.
\newblock In J. van Leeuwen (ed.),
\emph{Handbook of Theoretical
Computer Science. Volume B: Formal models and semantics}, North-Holland, pp.~675-788.
\end{thebibliography}
\end{document}